\documentclass[a4paper, reqno]{amsart}
\usepackage{hyperref,url}
\usepackage{a4wide}
\usepackage{amsthm, amssymb, amsmath, latexsym, upgreek,accents}
\usepackage{amsfonts, mathrsfs, color}
\usepackage{graphicx,soul}

\usepackage{tikz}
\numberwithin{equation}{section}

\theoremstyle{plain}                     
\begingroup
\newtheorem{theorem}{Theorem}[section]       
\newtheorem{prop}[theorem]{Proposition}     
\newtheorem{cor}[theorem]{Corollary}        
\newtheorem{lemma}[theorem]{Lemma}      
\endgroup
      
\theoremstyle{definition}                
\begingroup

\newtheorem{remark}[theorem]{Remark}
\endgroup

\newcommand{\R}{\mathbb R}
\newcommand{\N}{\mathbb N}

\newcommand{\C}{\mathbb C}
\newcommand{\e}{\varepsilon}
\newcommand{\mtwo}{\mathbb M^{2\times2}}
\newcommand{\mpos}{\mathbb M^{2\times2}_{\sym,+}}
\newcommand{\HH}{{\mathcal H}}
\newcommand{\LL}{{\mathcal L}}
\newcommand{\Om}{\widehat\Psi}

\def\P{\mathcal P}
\def\S{\mathcal S}

\DeclareMathOperator\supp{supp}

\DeclareMathOperator\sym{sym}
\DeclareMathOperator\Div{div}
\DeclareMathOperator\curl{curl}
\DeclareMathOperator\cpc{cap}
\DeclareMathOperator\tr{tr}

\newcommand{\mres}{\mathbin{\vrule height 1.6ex depth 0pt width
0.13ex\vrule height 0.13ex depth 0pt width 1.3ex}}

\title[Nonlocal anisotropic interactions of Coulomb type]{Nonlocal anisotropic interactions of Coulomb type}

\author[M.G. Mora]{Maria Giovanna Mora} 

\address[M.G. Mora]{Dipartimento di Matematica, Universit\`a di Pavia, via Ferrata 5, 27100 Pavia (Italy)}
\email{mariagiovanna.mora@unipv.it}

\begin{document}

\begin{abstract} 
In this paper we review some recent results on nonlocal interaction problems.
The focus is on interaction kernels that are anisotropic variants of the classical Coulomb kernel. In other words, while preserving the same singularity at zero of the Coulomb kernel, 
they present preferred directions of interaction.
For kernels of this kind and general confinement we will prove existence and uniqueness of minimisers of the corresponding energy.
In the case of a quadratic confinement we will review a recent result by Carrillo \& Shu about the explicit characterisation of minimisers, and present a new proof, which has the advantage of being extendable to higher dimension.
In light of this result, we will re-examine some previous works motivated by applications to dislocation theory in materials science.
Finally, we will discuss some related results and open questions.\bigskip

\noindent\textbf{AMS 2010 Mathematics Subject Classification:}  31A15 (primary); 49K20 (secondary)\medskip

\noindent \textbf{Keywords:} nonlocal energy, potential theory, anisotropic interaction, Coulomb potential

\end{abstract}

\maketitle

\begin{section}{Introduction}

The general goal of this review paper is the study of the minimisation problem for an energy of the form
\begin{equation}\label{int:I}
I(\mu)=\iint_{\R^N\times\R^N}W(x-y)\,d\mu(y)d\mu(x)+\int_{\R^N}V(x)\,d\mu(x)
\end{equation}
defined for $\mu\in\P(\R^N)$. Here $\P(\R^N)$ stands for the space of probability measures in $\R^N$. In this formulation
a measure $\mu\in\P(\R^N)$ represents the distribution of a family of particles in $\R^N$, the first integral in $I(\mu)$ is called the {\em interaction energy},
whereas the last integral usually plays the role of a {\em confinement energy}.

Energies as $I$ arise as {\em mean-field limits} of discrete energies. More precisely, let us consider $n$ particles in $\R^N$ located at points
$x^1,x^2,\dots, x^n$ in $\R^N$ and let us define as their interaction energy the quantity
$$
\frac1{n^2}\sum_{j=1}^n\sum_{k\neq j} W(x^j-x^k).
$$
Since the order in which particles are considered is irrelevant, it is natural to assume the interaction kernel $W$ to be an even function.
This discrete energy describes {\em nonlocal} interactions in the sense that each particle interacts with any other particle in the system and not only with those in its immediate neighbourhood.
If one identifies the distribution of particles with the so-called {\em empirical measure}
$$
\frac1n \sum_{j=1}^n \delta_{x^j},
$$
one can show, under suitable assumptions of $W$ and $V$, that the discrete energies
$$
\frac1{n^2}\sum_{j=1}^n\sum_{k\neq j} W(x^j-x^k) + \frac1n \sum_{j=1}^nV(x^j)
$$
$\Gamma$-converge to $I$, as $n\to\infty$, with respect to the narrow convergence in $\P(\R^N)$ (see, e.g., \cite{Sca-thesis}). In other words, minimisers of $I$ describe the asymptotic behaviour of optimal distributions at the discrete level in the many-particle limit.

In many applications the typical interaction among particles is short-range repulsive and long-range attractive.
This behaviour can be reproduced in the energy $I$ by assuming:
\begin{itemize}

\item $W(x)\to+\infty$, as $x\to0$, so that the interaction energy blows up when particles get too close to one another;

\item $V(x)\to+\infty$ fast enough, as $|x|\to+\infty$, so that the confinement energy blows up when particles escape at infinity.

\end{itemize} 
As a model example, we can consider as $W$ the Coulomb kernel
$$
W(x)=\begin{cases}
-\log|x| & \text{ if } N=2, \smallskip\\
\dfrac1{|x|^{N-2}} & \text{ if } N\geq3,
\end{cases}
$$
and, as confinement potential, a power law $V(x)=|x|^p$ with $p>0$ or the indicator of a given compact set $K\subset\R^N$
$$
V(x)=\begin{cases}
0 & \text{ if } x\in K, \\
+\infty & \text{ if } x\not\in K.
\end{cases}
$$
In this last case (which we call {\em physical confinement}) minimising $I$ is equivalent to minimising the sole interaction energy on the class of probability measures supported in $K$.

Continuum energies as $I$, as well as their discrete counterparts, are relevant in a variety of applications, ranging from 
physics (electrostatics, Coulomb gases, Ginzburg-Landau theory) to biology (population dynamics)
and materials science. In particular, the Coulomb kernel is probably the most studied interaction kernel in physics and in mathematics.

Besides existence and uniqueness, one of the main questions in minimising $I$ is whether minimisers can be identified or at least some of their qualitative properties can be established. 
For instance, can we determine the dimension of their supports and their shape? Is the distribution ``regular'' on the support? One of the key difficulties in addressing these questions is
the nonlocal nature of the problem: given a distribution $\mu$, any local perturbation of $\mu$, however small, will have a global impact on the interaction energy. Moreover, numerical
simulations show that, according to the different choice of $W$ and $V$, minimisers may present a rich variety of geometries and shapes (see, e.g, \cite{KSUB}).

In this paper we will focus on the two-dimensional case $N=2$ and on interaction kernels of the form
\begin{equation}\label{int:Wk}
W(x)=-\log|x|+\kappa(x),
\end{equation}
where $\kappa$ is an even $0$-homogeneous function, smooth enough outside $0$.
The kernel $W$ can be seen as a perturbation (not small though) of the 2d Coulomb kernel. Since $\kappa$ is $0$-homogeneous,
$\kappa(x)$ depends only on the angle that $x$ forms with respect to a given reference axis. In this sense we call $\kappa$ an {\em anisotropic kernel}, meaning
that it introduces some preferred directions of interaction. 

The goal of this paper is to review the most recent results about existence, uniqueness, and characterisation of minimisers for kernels $W$ of the form \eqref{int:Wk} 
and general confinements $V$. The common thread of these results is the following key idea. A clever way to look at this class of problems is via Fourier analysis.
In fact, if we denote by $\widehat f$ the Fourier transform of $f$ given by
$$
\widehat f(\xi)=\frac1{2\pi}\int_{\R^2}f(x)e^{-i\xi\cdot x}\, dx \quad \text{ for }\xi\in\R^2,
$$
formally we have
$$
\widehat{W\ast\mu}=2\pi\,\widehat W\widehat\mu
$$
and by Plancherel Theorem
\begin{equation}\label{int:formal}
\iint_{\R^2\times\R^2}W(x-y)\,d\mu(y)d\mu(x) = \int_{\R^2}(W\ast\mu)\,d\mu=
\int_{\R^2}\widehat{W\ast\mu}(\xi)\overline{\widehat \mu}(\xi)\, d\xi=
2\pi \int_{\R^2}\widehat W|\widehat\mu|^2\, d\xi.
\end{equation}
In other words, the nonlocal interaction can be expressed in a local form in the Fourier space. Note, however, that \eqref{int:formal} holds true only under specific assumptions for $\widehat W$ and $\mu$ (see Proposition~\ref{prop:pars}). Using \eqref{int:formal} we will show that a sign condition on $\widehat W$ guarantees strict convexity of the energy and, therefore, uniqueness of minimisers (see Section~\ref{sec:uniq}). Moreover, the inversion formula for the Fourier transform will be a crucial ingredient in the characterisation results of Section~\ref{sec:ch}.

\subsection{Motivation}
The study of interaction kernels of the form \eqref{int:Wk} is motivated by materials science, more precisely, by {\em dislocation theory}.
Dislocations are defects in the crystalline lattice of a metal, whose presence and concerted movement favour plastic slips, that is, relative slips of atomic layers, that macroscopically result into a shearing plastic deformation.

Let us consider an idealised three-dimensional cubic lattice, where all two-dimensional sections along a certain direction are assumed to be identical.
In this simplified two-dimensional setting a dislocation of edge type looks as in Fig.~\ref{fig1}.
\begin{figure}
\begin{tikzpicture}
\draw[thick] (-0.5,5) -- (4.7,5);
\filldraw[color=blue] (0,5) circle (3pt);
\filldraw[color=blue] (0.7,5) circle (3pt);
\filldraw[color=blue] (1.4,5) circle (3pt);
\filldraw[color=blue] (2.1,5) circle (3pt);
\filldraw[color=blue] (2.8,5) circle (3pt);
\filldraw[color=blue] (3.5,5) circle (3pt);
\filldraw[color=blue] (4.2,5) circle (3pt);
\draw[thick] (-0.5,4.3) -- (4.7,4.3);
\filldraw[color=blue] (0,4.3) circle (3pt);
\filldraw[color=blue] (0.71,4.3) circle (3pt);
\filldraw[color=blue] (1.42,4.3) circle (3pt);
\filldraw[color=blue] (2.1,4.3) circle (3pt);
\filldraw[color=blue] (2.78,4.3) circle (3pt);
\filldraw[color=blue] (3.49,4.3) circle (3pt);
\filldraw[color=blue] (4.2,4.3) circle (3pt);
\draw[thick] (-0.45,3.6) -- (4.65,3.6);
\filldraw[color=blue] (0.05,3.6) circle (3pt);
\filldraw[color=blue] (0.76,3.6) circle (3pt);
\filldraw[color=blue] (1.44,3.6) circle (3pt);
\filldraw[color=blue] (2.1,3.6) circle (3pt);
\filldraw[color=blue] (2.76,3.6) circle (3pt);
\filldraw[color=blue] (3.44,3.6) circle (3pt);
\filldraw[color=blue] (4.15,3.6) circle (3pt);
\draw[thick] (-0.4,2.9) -- (4.6,2.9);
\filldraw[color=blue] (0.1,2.9) circle (3pt);
\filldraw[color=blue] (0.825,2.9) circle (3pt);
\filldraw[color=blue] (1.6,2.9) circle (3pt);
\filldraw[color=blue] (2.6,2.9) circle (3pt);
\filldraw[color=blue] (3.375,2.9) circle (3pt);
\filldraw[color=blue] (4.1,2.9) circle (3pt);
\draw[thick] (-0.35,2.2) -- (4.55,2.2);
\filldraw[color=blue] (0.15,2.2) circle (3pt);
\filldraw[color=blue] (0.875,2.2) circle (3pt);
\filldraw[color=blue] (1.675,2.2) circle (3pt);
\filldraw[color=blue] (2.525,2.2) circle (3pt);
\filldraw[color=blue] (3.325,2.2) circle (3pt);
\filldraw[color=blue] (4.05,2.2) circle (3pt);
\draw[thick] (-0.25,1.5) -- (4.45,1.5);
\filldraw[color=blue] (0.25,1.5) circle (3pt);
\filldraw[color=blue] (0.95,1.5) circle (3pt);
\filldraw[color=blue] (1.7,1.5) circle (3pt);
\filldraw[color=blue] (2.5,1.5) circle (3pt);
\filldraw[color=blue] (3.25,1.5) circle (3pt);
\filldraw[color=blue] (3.95,1.5) circle (3pt);
\draw[thick] (-0.15,0.8) -- (4.35,0.8);
\filldraw[color=blue] (0.35,0.8) circle (3pt);
\filldraw[color=blue] (1.05,0.8) circle (3pt);
\filldraw[color=blue] (1.75,0.8) circle (3pt);
\filldraw[color=blue] (2.45,0.8) circle (3pt);
\filldraw[color=blue] (3.15,0.8) circle (3pt);
\filldraw[color=blue] (3.85,0.8) circle (3pt);
\draw[thick] (0.35, 0.3) -- (0.35,0.8) -- (0.25,1.5) -- (0.15,2.2) -- (0.1,2.9) -- (0.05,3.6) -- (0,4.3) -- (0,5.5);
\draw[thick] (1.05, 0.3) -- (1.05,0.8) -- (0.95,1.5) -- (0.875,2.2) -- (0.825,2.9) -- (0.76,3.6) -- (0.71,4.3) -- (0.7,5) -- (0.7,5.5);
\draw[thick] (1.75, 0.3) -- (1.75,0.8) -- (1.7,1.5) -- (1.675,2.2) -- (1.6,2.9) -- (1.44,3.6) -- (1.42,4.3) -- (1.4, 5) -- (1.4,5.5);
\draw[thick] (2.1,3.6) -- (2.1,5.5);
\draw[thick] (2.45,0.3) -- (2.45,0.8) -- (2.5,1.5) --(2.525, 2.2) -- (2.6,2.9) -- (2.76,3.6) --(2.78,4.3) -- (2.8,5) -- (2.8,5.5);
\draw[thick] (3.15, 0.3) -- (3.15, 0.8) -- (3.25,1.5) -- (3.325, 2.2) -- (3.375,2.9) -- (3.44, 3.6) -- (3.49,4.3) -- (3.5,5) -- (3.5,5.5);
\draw[thick] (3.85, 0.3) -- (3.85, 0.8) -- (3.95,1.5) -- (4.05, 2.2) -- (4.1,2.9) -- (4.15,3.6) -- (4.2, 4.3) -- (4.2,5.5);
\end{tikzpicture}
\caption{A cubic crystal with a dislocation.}\label{fig1}
\end{figure}
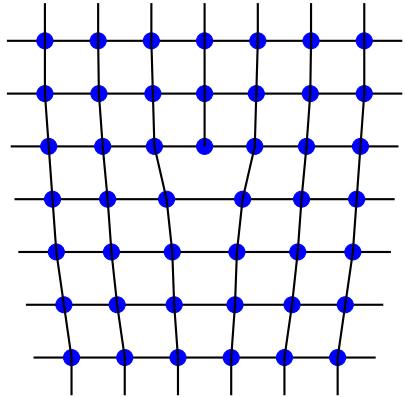

The presence of a dislocation can be detected as follows. One draws a so-called {\em Burgers circuit}, that is, a closed circuit
enclosing the defect. If we draw the same circuit in a perfect reference crystal, the circuit does not close up, see Fig.~\ref{fig2}. The vector that needs to be added to close the circuit is defined as the {\em Burgers vector}. The Burgers vector is thus a measure of the discrepancy between the distorted lattice and a perfect lattice.

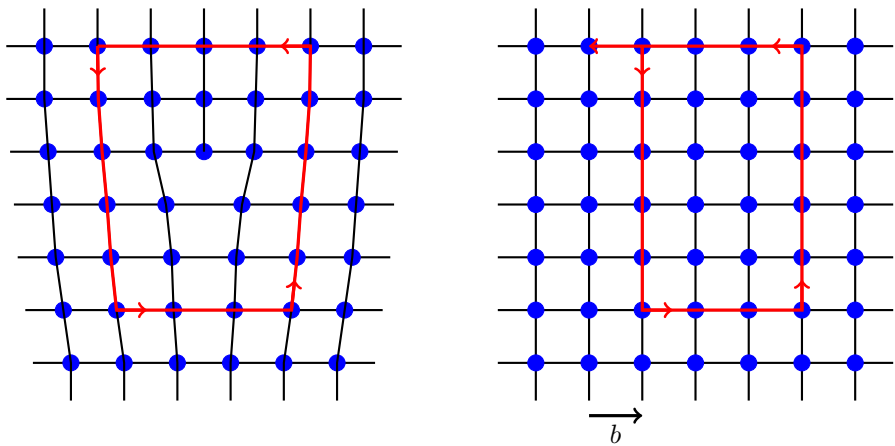
\begin{figure}
\begin{tikzpicture}
\draw[thick] (-0.5,5) -- (4.7,5);
\filldraw[color=blue] (0,5) circle (3pt);
\filldraw[color=blue] (0.7,5) circle (3pt);
\filldraw[color=blue] (1.4,5) circle (3pt);
\filldraw[color=blue] (2.1,5) circle (3pt);
\filldraw[color=blue] (2.8,5) circle (3pt);
\filldraw[color=blue] (3.5,5) circle (3pt);
\filldraw[color=blue] (4.2,5) circle (3pt);
\draw[thick] (-0.5,4.3) -- (4.7,4.3);
\filldraw[color=blue] (0,4.3) circle (3pt);
\filldraw[color=blue] (0.71,4.3) circle (3pt);
\filldraw[color=blue] (1.42,4.3) circle (3pt);
\filldraw[color=blue] (2.1,4.3) circle (3pt);
\filldraw[color=blue] (2.78,4.3) circle (3pt);
\filldraw[color=blue] (3.49,4.3) circle (3pt);
\filldraw[color=blue] (4.2,4.3) circle (3pt);
\draw[thick] (-0.45,3.6) -- (4.65,3.6);
\filldraw[color=blue] (0.05,3.6) circle (3pt);
\filldraw[color=blue] (0.76,3.6) circle (3pt);
\filldraw[color=blue] (1.44,3.6) circle (3pt);
\filldraw[color=blue] (2.1,3.6) circle (3pt);
\filldraw[color=blue] (2.76,3.6) circle (3pt);
\filldraw[color=blue] (3.44,3.6) circle (3pt);
\filldraw[color=blue] (4.15,3.6) circle (3pt);
\draw[thick] (-0.4,2.9) -- (4.6,2.9);
\filldraw[color=blue] (0.1,2.9) circle (3pt);
\filldraw[color=blue] (0.825,2.9) circle (3pt);
\filldraw[color=blue] (1.6,2.9) circle (3pt);
\filldraw[color=blue] (2.6,2.9) circle (3pt);
\filldraw[color=blue] (3.375,2.9) circle (3pt);
\filldraw[color=blue] (4.1,2.9) circle (3pt);
\draw[thick] (-0.35,2.2) -- (4.55,2.2);
\filldraw[color=blue] (0.15,2.2) circle (3pt);
\filldraw[color=blue] (0.875,2.2) circle (3pt);
\filldraw[color=blue] (1.675,2.2) circle (3pt);
\filldraw[color=blue] (2.525,2.2) circle (3pt);
\filldraw[color=blue] (3.325,2.2) circle (3pt);
\filldraw[color=blue] (4.05,2.2) circle (3pt);
\draw[thick] (-0.25,1.5) -- (4.45,1.5);
\filldraw[color=blue] (0.25,1.5) circle (3pt);
\filldraw[color=blue] (0.95,1.5) circle (3pt);
\filldraw[color=blue] (1.7,1.5) circle (3pt);
\filldraw[color=blue] (2.5,1.5) circle (3pt);
\filldraw[color=blue] (3.25,1.5) circle (3pt);
\filldraw[color=blue] (3.95,1.5) circle (3pt);
\draw[thick] (-0.15,0.8) -- (4.35,0.8);
\filldraw[color=blue] (0.35,0.8) circle (3pt);
\filldraw[color=blue] (1.05,0.8) circle (3pt);
\filldraw[color=blue] (1.75,0.8) circle (3pt);
\filldraw[color=blue] (2.45,0.8) circle (3pt);
\filldraw[color=blue] (3.15,0.8) circle (3pt);
\filldraw[color=blue] (3.85,0.8) circle (3pt);
\draw[thick] (0.35, 0.3) -- (0.35,0.8) -- (0.25,1.5) -- (0.15,2.2) -- (0.1,2.9) -- (0.05,3.6) -- (0,4.3) -- (0,5.5);
\draw[thick] (1.05, 0.3) -- (1.05,0.8) -- (0.95,1.5) -- (0.875,2.2) -- (0.825,2.9) -- (0.76,3.6) -- (0.71,4.3) -- (0.7,5) -- (0.7,5.5);
\draw[thick] (1.75, 0.3) -- (1.75,0.8) -- (1.7,1.5) -- (1.675,2.2) -- (1.6,2.9) -- (1.44,3.6) -- (1.42,4.3) -- (1.4, 5) -- (1.4,5.5);
\draw[thick] (2.1,3.6) -- (2.1,5.5);
\draw[thick] (2.45,0.3) -- (2.45,0.8) -- (2.5,1.5) --(2.525, 2.2) -- (2.6,2.9) -- (2.76,3.6) --(2.78,4.3) -- (2.8,5) -- (2.8,5.5);
\draw[thick] (3.15, 0.3) -- (3.15, 0.8) -- (3.25,1.5) -- (3.325, 2.2) -- (3.375,2.9) -- (3.44, 3.6) -- (3.49,4.3) -- (3.5,5) -- (3.5,5.5);
\draw[thick] (3.85, 0.3) -- (3.85, 0.8) -- (3.95,1.5) -- (4.05, 2.2) -- (4.1,2.9) -- (4.15,3.6) -- (4.2, 4.3) -- (4.2,5.5);
\draw[very thick, color=red] (0.7,5) -- (3.5,5) -- (3.49,4.3) -- (3.44, 3.6) -- (3.375,2.9) -- (3.325, 2.2) -- (3.25,1.5) -- (0.95,1.5) -- (0.875,2.2) -- (0.825,2.9) -- (0.76,3.6) -- (0.71,4.3) -- (0.7,5);
\draw[very thick, color=red, ->] (0.7,5) -- (0.7057,4.6);
\draw[very thick, color=red, ->] (0.95,1.5) -- (1.35,1.5);
\draw[very thick, color=red, ->] (3.25,1.5) -- (3.2928,1.9);
\draw[very thick, color=red, <-] (3.1,5) -- (3.5,5);
\phantom{\draw[very thick, ->] (0.7,0.1) -- (1.4,0.1);
\node[below] at (1.05,0.1) {$b$};}
\end{tikzpicture}
\hspace{1cm}
\begin{tikzpicture}
\draw[thick] (-0.5,5) -- (4.7,5);
\draw[thick] (-0.5,4.3) -- (4.7,4.3);
\draw[thick] (-0.5,3.6) -- (4.7,3.6);
\draw[thick] (-0.5,2.9) -- (4.7,2.9);
\draw[thick] (-0.5,2.2) -- (4.7,2.2);
\draw[thick] (-0.5,1.5) -- (4.7,1.5);
\draw[thick] (-0.5,0.8) -- (4.7,0.8);
\draw[thick] (0,0.3) -- (0,5.5);
\draw[thick] (0.7,0.3) -- (0.7,5.5);
\draw[thick] (1.4,0.3) -- (1.4,5.5);
\draw[thick] (2.1,0.3) -- (2.1,5.5);
\draw[thick] (2.8,0.3) -- (2.8,5.5);
\draw[thick] (3.5,0.3) -- (3.5,5.5);
\draw[thick] (4.2,0.3) -- (4.2,5.5);
\filldraw[color=blue] (0,5) circle (3pt);
\filldraw[color=blue] (0.7,5) circle (3pt);
\filldraw[color=blue] (1.4,5) circle (3pt);
\filldraw[color=blue] (2.1,5) circle (3pt);
\filldraw[color=blue] (2.8,5) circle (3pt);
\filldraw[color=blue] (3.5,5) circle (3pt);
\filldraw[color=blue] (4.2,5) circle (3pt);
\filldraw[color=blue] (0,4.3) circle (3pt);
\filldraw[color=blue] (0.7,4.3) circle (3pt);
\filldraw[color=blue] (1.4,4.3) circle (3pt);
\filldraw[color=blue] (2.1,4.3) circle (3pt);
\filldraw[color=blue] (2.8,4.3) circle (3pt);
\filldraw[color=blue] (3.5,4.3) circle (3pt);
\filldraw[color=blue] (4.2,4.3) circle (3pt);
\filldraw[color=blue] (0,3.6) circle (3pt);
\filldraw[color=blue] (0.7,3.6) circle (3pt);
\filldraw[color=blue] (1.4,3.6) circle (3pt);
\filldraw[color=blue] (2.1,3.6) circle (3pt);
\filldraw[color=blue] (2.8,3.6) circle (3pt);
\filldraw[color=blue] (3.5,3.6) circle (3pt);
\filldraw[color=blue] (4.2,3.6) circle (3pt);
\filldraw[color=blue] (0,2.9) circle (3pt);
\filldraw[color=blue] (0.7,2.9) circle (3pt);
\filldraw[color=blue] (1.4,2.9) circle (3pt);
\filldraw[color=blue] (2.1,2.9) circle (3pt);
\filldraw[color=blue] (2.8,2.9) circle (3pt);
\filldraw[color=blue] (3.5,2.9) circle (3pt);
\filldraw[color=blue] (4.2,2.9) circle (3pt);
\filldraw[color=blue] (0,2.2) circle (3pt);
\filldraw[color=blue] (0.7,2.2) circle (3pt);
\filldraw[color=blue] (1.4,2.2) circle (3pt);
\filldraw[color=blue] (2.1,2.2) circle (3pt);
\filldraw[color=blue] (2.8,2.2) circle (3pt);
\filldraw[color=blue] (3.5,2.2) circle (3pt);
\filldraw[color=blue] (4.2,2.2) circle (3pt);
\filldraw[color=blue] (0,1.5) circle (3pt);
\filldraw[color=blue] (0.7,1.5) circle (3pt);
\filldraw[color=blue] (1.4,1.5) circle (3pt);
\filldraw[color=blue] (2.1,1.5) circle (3pt);
\filldraw[color=blue] (2.8,1.5) circle (3pt);
\filldraw[color=blue] (3.5,1.5) circle (3pt);
\filldraw[color=blue] (4.2,1.5) circle (3pt);
\filldraw[color=blue] (0,0.8) circle (3pt);
\filldraw[color=blue] (0.7,0.8) circle (3pt);
\filldraw[color=blue] (1.4,0.8) circle (3pt);
\filldraw[color=blue] (2.1,0.8) circle (3pt);
\filldraw[color=blue] (2.8,0.8) circle (3pt);
\filldraw[color=blue] (3.5,0.8) circle (3pt);
\filldraw[color=blue] (4.2,0.8) circle (3pt);
\draw[very thick, color=red, ->] (1.4,5) -- (1.4,1.5) -- (3.5,1.5) -- (3.5,5) -- (0.7,5);
\draw[very thick, color=red, ->] (1.4,5) -- (1.4,4.6);
\draw[very thick, color=red, ->] (1.4,1.5) -- (1.8,1.5);
\draw[very thick, color=red, ->] (3.5,1.5) -- (3.5,1.9);
\draw[very thick, color=red, ->] (3.5,5) -- (3.1,5);
\draw[very thick, ->] (0.7,0.1) -- (1.4,0.1);
\node[below] at (1.05,0.1) {$b$};
\end{tikzpicture}
\caption{The Burgers circuit and the Burgers vector $b$.}\label{fig2}
\end{figure}

Assume now that every dislocation has the same Burgers vector (which for simplicity we set to be $e_1$).
We would like to compute the interaction force between two dislocations, the so-called {\em Peach-K\"ohler force}.
To make this computation we consider a semi-discrete setting, where the crystal is described as a continuum medium
(as if the discrete lattice structure were averaged out), but dislocations are still modelled as point singularities. 
In the framework of linear elasticity the fundamental strain generated by a dislocation of Burgers vector $e_1$ located at $0$ is a solution $\beta:\R^2\to\mtwo$ of the problem
$$
\begin{cases}
\Div\C\beta=0 & \text{ in }\R^2, \\
\curl\beta=\delta_0 e_1 & \text{ in }\R^2, 
\end{cases}
$$
where $\C$ is the tensor of linear elasticity and $\delta_0$ denotes the Dirac delta at $0$. The Peach-K\"ohler force can then be computed by the formula
$$
F=\C\beta e_1\times e_3,
$$
where $\C\beta$ is the stress associated with the fundamental strain, $e_1$ is the Burgers vector, and $e_3$ represents the dislocation line (in our simplified 2d setting the dislocation line is orthogonal to the plane of the 2d section). For $x\in\R^2$ the Peach-K\"ohler force $F(x)$ is the configurational force that a dislocation at $x$ experiences because of the dislocation at $0$. It turns out (see \cite[Chapter~13-4]{HL}) that
$$
F=-(c\nabla W,0)
$$
for some positive material constant $c>0$, where $W$ is of the form \eqref{int:Wk} with
\begin{equation}\label{kappa-aniso}
\kappa(x)=-\frac14 \frac{a+b}a\log\Big(\frac{x_1^2+(a+b)^2x_2^2}{|x|^2}\Big)+\frac14 \frac{b-a}a\log\Big(\frac{x_1^2+(b-a)^2x_2^2}{|x|^2}\Big)
\end{equation}
for $x=(x_1,x_2)\in\R^2$.
Here $b>a>0$ are material constants. If the medium is isotropic (which corresponds to $a\to0^+$, $b=1$), the above expression reduces, up to additive constants, to
\begin{equation}\label{kappa-iso}
\kappa(x)=\frac{x_1^2}{|x|^2}.
\end{equation}

We would like to predict the optimal distribution of dislocations at equilibrium in this setting.
For simplicity, 
let us focus on the isotropic case \eqref{kappa-iso} and assume to have exactly two dislocations located at $x$ and $y$. Their interaction energy is given by
$$
-\log|x-y|+\frac{(x_1-y_1)^2}{|x-y|^2}.
$$
The Coulomb term forces $x$ and $y$ to be as far as possible (this repulsive behaviour is counterbalanced by the presence of some confinement), whereas the anisotropic term is minimised when $x_1=y_1$, that is, when the two dislocations are aligned vertically.
Does the same phenomenon occur in the mesoscopic description \eqref{int:I}? In other words, let $I$ be the energy defined by
\begin{equation}\label{int:I2d}
I(\mu)=\iint_{\R^2\times\R^2}\Big(-\log|x-y|+\frac{(x_1-y_1)^2}{|x-y|^2}\Big)\,d\mu(y)d\mu(x)+\int_{\R^2}V(x)\,d\mu(x),
\end{equation}
where $\mu\in\P(\R^2)$ represents now the distribution of a family of dislocations of Burgers vector $e_1$. Is it true that minimisers of $I$ have a 1d vertical support? And if so, what is their distribution? This last question is actually not difficult to answer. In fact, let $\mu\in\P(\R^2)$ be a measure of the form $\mu=\delta_{\bar x_1}\otimes\nu$ with $\bar x_1\in\R$ and $\nu\in\P(\R)$. When restricted to measures of this form the energy $I$ reduces to
$$
J(\bar x_1,\nu):=-\iint_{\R\times\R}\log|s-t|\,d\nu(t)d\nu(s)+\int_{\R}V(\bar x_1,t)\,d\nu(t).
$$
Thus, if the minimiser of $I$ has a vertical support, its vertical projection minimises the 1d functional $J(\bar x_1, \cdot)$ among all measures in $\P(\R)$.
The functional $J$ is known in the literature as the log-gas energy and its minimisers can be explicitly computed for several confinements. For instance, for $V(x)=|x|^2$ 
the unique minimiser of $J(\bar x_1, \cdot)$ is the so-called {\em semicircle law}, that is, the measure
$$
\frac1\pi \sqrt{2-t^2}\LL^1\mres[-\sqrt2,\sqrt2](t)
$$
(and the optimal choice for $\bar x_1$ is clearly $\bar x_1=0$).
Therefore, in the case $V(x)=|x|^2$, if the energy $I$ in \eqref{int:I2d} has a minimiser $\mu$ with a vertical support, then necessarily $\mu$ is given by the semicircle law on the vertical axis
\begin{equation}\label{semicircle}
\frac1\pi \delta_0(x_1)\otimes\sqrt{2-x_2^2}\HH^1\mres[-\sqrt2,\sqrt2](x_2).
\end{equation}

\subsection{Overview of the results}
We now briefly review the main results about explicit characterisation of minimisers for kernels of the form \eqref{int:Wk}.
We consider an energy of the form \eqref{int:I2d} with confinement $V(x)=|x|^2$ and we introduce a parameter $\alpha\in\R$ in front of the anisotropic term.
The role of $\alpha$ is that of tuning the strength of the anisotropic interaction. 
The energy 
\begin{equation}\label{int:Ialfa}
I_\alpha(\mu)=\iint_{\R^2\times\R^2}\Big(-\log|x-y|+\alpha\frac{(x_1-y_1)^2}{|x-y|^2}\Big)\,d\mu(y)d\mu(x)+\int_{\R^2}|x|^2\,d\mu(x)
\end{equation}
has a unique minimiser, which can be characterised as follows:
\begin{itemize}
\item if $\alpha=0$ (purely Coulomb case), the minimiser is the so-called {\em circle law} 
$$
\frac1\pi\chi_{B_1(0)};
$$
the derivation of this classical result is attributed to Ginibre \cite{G}, Mehta \cite{M}, and Girko~\cite{Gk};

\item if $\alpha=1$ (dislocation case), the minimiser is the semicircle law on the vertical axis, that is, the measure \eqref{semicircle};
this was proved in \cite{MRS};

\item if $\alpha\in(0,1)$, the minimiser is given by the {\em ellipse law}
\begin{equation}\label{E-alfa}
\frac1{|E_\alpha|}\chi_{E_\alpha},
\end{equation}
where 
$$
E_\alpha=\Big\{ x\in\R^2: \ \frac{x_1^2}{1-\alpha}+\frac{x_2^2}{1+\alpha}\leq 1\Big\};
$$
this result is contained in \cite{CMMRSV}.

\end{itemize}
We mention that the minimiser of $I_\alpha$ can be actually identified for any value of $\alpha\in\R$, see Theorem~\ref{thm:disl}.

The above results show that at $\alpha=1$ the anisotropy has a dramatic effect on the structure of the minimiser and, in particular,
on its dimensionality. For any $\alpha\in[0,1)$ the minimiser is given by
a uniform distribution on a two-dimensional set and it is only at $\alpha=1$ that an abrupt loss of dimensionality occurs.
A natural question is whether this phenomenon can be explained in mathematical terms and how much of
this analysis is bound to the specific choice of the anisotropy.

In \cite{MMRSV21-NA} we conjectured that the loss of dimensionality could be related to a change of sign of the Fourier transform of the interaction kernel.
Indeed, we have that
$$
\widehat{W_\alpha}(\xi)= c_\alpha\delta_0+\frac{(1-\alpha)\xi_1^2+(1+\alpha)\xi_2^2}{|\xi|^4},
$$
see \eqref{Fourier Wa} below. Thus, for $\alpha\in[0,1)$ the Fourier transform of $W_\alpha$ is strictly positive outside 0, whereas it is only non-negative for $\alpha=1$.

Let us consider now an interaction kernel of the form \eqref{int:Wk} with a general even and $0$-homoge\-ne\-ous anisotropy $\kappa$, and confinement $V(x)=|x|^2$.
In \cite{MMRSV21} it was proved that, if $\kappa$ is small enough in $C^3(\mathbb S^1)$, then the minimiser is unique and given by the normalised characteristic function
of the domain enclosed by an ellipse centered at the origin. Note that the smallness assumption on $\kappa$ implies in particular that the Fourier transform of $W$ outside 0 is strictly positive.

Recently, Carrillo \& Shu proved that the smallness assumption in \cite{MMRSV21} can be lifted, as long as the condition on the non-negativity of the Fourier transform of $W$ outside 0 is preserved.
More precisely, in \cite{CS22} the following remarkable result was proved: let $W$ be an interaction kernel of the form \eqref{int:Wk} with $\kappa$ even, $0$-homogeneous, and smooth enough on $\mathbb S^1$, and let the confinement be $V(x)=|x|^2$; then,
\begin{itemize}
\item if $\widehat W>0$ outside 0, the unique minimiser is the normalised characteristic function
of the domain enclosed by an ellipse centered at the origin;
\item if $\widehat W\geq0$ outside 0, the unique minimiser is either as above or is a semicircle law on a line passing through the origin.
\end{itemize}
This result sheds some light on the relation between loss of dimensionality
and change of sign of the Fourier transform: for this class of kernels loss of dimensionality cannot occur as long as the Fourier transform is `not degenerate'.
However, explicit examples show that for a degenerate Fourier transform both cases (ellipse or semicircle law) can indeed occur, see Remark~\ref{ross}.

In this article we will first review the existence and uniqueness results for interaction kernels of the form \eqref{int:Wk} and general confinements.
In the case of the quadratic confinement $V(x)=|x|^2$ we will present a new proof of the result \cite{CS22} by Carrillo \& Shu, which follows the approach proposed in \cite{MMRSV22p} for the analogous problem in 3d. This different approach has the advantage of being extendable to higher dimension.
We will then deduce the characterisation of minimisers in the dislocation case \eqref{kappa-iso} from this general result. Finally, we will examine an example with physical confinement given by the domain enclosed by an ellipse and discuss some open questions.

\end{section}

\begin{section}{The existence result and first properties of minimisers}

Throughout the paper we will consider an interaction kernel of the form
\begin{equation}\label{kernel}
W(x)=-\log|x|+\kappa(x) \quad \text{ for } x\neq0, \qquad W(0)=+\infty,
\end{equation}
where $\kappa$ is an even $0$-homogeneous function of class $H^s$ on $\mathbb S^1$ with $s>3/2$ .
We will denote the 2d Coulomb kernel by $W_0$, that is,
$$
W_0(x)=-\log|x| \quad \text{ for } x\neq0, \qquad W_0(0)=+\infty.
$$
Since $\kappa$ is bounded on $\R^2\setminus\{0\}$, there exist two constants $C_1,C_2\in\R$ such that
\begin{equation}\label{bddW}
W_0(x)+C_1\leq W(x) \leq W_0(x)+C_2 \quad \text{ for every } x\in\R^2.
\end{equation}

\subsection{Logarithmic capacity}
For any compact set $K\subset\R^2$ we define the logarithmic capacity of $K$ as
$$
\cpc(K)=\Phi\Big(
\inf_{\mu\in\P(K)} \iint_{K\times K}W_0(x-y)\,d\mu(x)d\mu(y)\Big),
$$
where $\Phi(t)=e^{-t}$ for $t\in\R$ and $\Phi(+\infty)=0$. Note that the integral above is well defined (possibly equal to $+\infty$), since the integrand is bounded from below 
on $K\times K$.

If $B\subset\R^2$ is a Borel set, we define its capacity as
$$
\cpc(B)=\sup\big\{ \cpc(K): \ K\text{ compact, } K\subset B\big\}.
$$
We will say that a property holds {\it quasi everywhere} (q.e.) if it holds up to sets of zero capacity.

The key property of capacity is the following: if $\mu\in\P(\R^2)$ has compact support and satisfies
\begin{equation}\label{key-cap}
\iint_{\R^2\times\R^2}W(x-y)\,d\mu(x)d\mu(y)<+\infty,
\end{equation}
then $\mu(B)=0$ for every Borel set $B$ with $\cpc(B)=0$. In other words, if a property holds q.e., then it holds $\mu$-a.e.\ for any $\mu\in\P(\R^2)$ with compact support
and satisfying \eqref{key-cap} (as we will see, these are the relevant measures for the minimisation problem under study). This can be shown as follows: assume by contradiction that $\mu(B)>0$,
where $B\subset\R^2$ is a Borel set with $\cpc(B)=0$. Then there exists a compact set $K\subset B$ such that $\mu(K)>0$. 
Since $\mu$ is compactly supported, there exists $C_0>0$ such that
$$
W(x-y)\geq-C_0 \quad \text{ for every } (x,y)\in (\supp\mu)^2.
$$
Therefore, setting
$$
\nu:=\frac{1}{\mu(K)}\mu\mres K,
$$
by \eqref{key-cap} we have
$$
\iint_{K\times K}W(x-y)\,d\nu(x)d\nu(y)\leq \frac1{\mu(K)^2}\iint_{\R^2\times \R^2}W(x-y)\,d\mu(x)d\mu(y)+\frac{C_0}{\mu(K)^2}-C_0<+\infty.
$$
On the other hand, from \eqref{bddW} it follows that
$$
\iint_{K\times K}W(x-y)\,d\nu(x)d\nu(y)\geq \iint_{K\times K}W_0(x-y)\,d\nu(x)d\nu(y) +C_1.
$$
This would imply that $\cpc(K)>0$, contradicting the assumption $\cpc(B)=0$.

In a similar way one can show that a countable union of sets with zero capacity has zero capacity.

\subsection{The confinement potential}\label{ss:conf}
In this section we assume the confinement potential $V:\R^2\to\R\cup\{+\infty\}$ to be a lower semicontinuous function, which is bounded from below
and satisfies the following conditions:
\begin{equation}\label{V1}
\lim_{|x|\to+\infty}\Big(\frac12 V(x)-\log|x|\Big)=+\infty
\end{equation}
and
\begin{equation}\label{V2}
\cpc\big(\{x\in\R^2: \ V(x)<+\infty\}\big)>0.
\end{equation}
Examples of admissible confinements are the power laws $V(x)=|x|^p$ with $p>0$ or the indicator function of a compact set of positive capacity.

\subsection{Main results}
For every $\mu\in\P(\R^2)$ we define
\begin{equation}\label{realdef}
I(\mu)=\iint_{\R^2\times\R^2}\Big(W(x-y)+\frac12 V(x)+\frac12 V(y)\Big)\,d\mu(y)d\mu(x),
\end{equation}
where the interaction kernel $W$ is as in \eqref{kernel} and the confinement potential $V$ as in Section~\ref{ss:conf}.

\begin{theorem}[\cite{Fro,SaTo}]\label{thm:exist}
The energy $I$ is well defined and has a minimiser in $\P(\R^2)$. If $\mu$ is a minimiser of~$I$, then $I(\mu)<+\infty$, the support of $\mu$ is a compact set, and
$\mu$ satisfies the following Euler-Lagrange equations: there exists a constant $c=c(\mu)$ such that
\begin{equation}\label{EL1}
(W\ast\mu)(x)+\frac12 V(x)=c \quad \text{ for }\mu\text{-a.e.\ } x\in \supp\mu,
\end{equation}
and
\begin{equation}\label{EL2}
(W\ast\mu)(x)+\frac12 V(x)\geq c \quad \text{for q.e.\ } x\in\R^2.
\end{equation}
\end{theorem}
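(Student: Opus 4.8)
I would prove Theorem~\ref{thm:exist} by the classical route of Frostman and Saff--Totik: the direct method of the calculus of variations for existence, and a first-variation argument combined with potential theory for the Euler--Lagrange conditions. The opening move is the elementary bound $-\log|x-y|\ge-\log(1+|x|)-\log(1+|y|)$, valid for all $x,y\in\R^2$ since $|x-y|\le(1+|x|)(1+|y|)$; together with \eqref{bddW}, \eqref{V1}, and the lower bound on $V$, it shows that the integrand in \eqref{realdef} is bounded below by a constant. Since $W$ is lower semicontinuous (it is continuous on $\R^2\setminus\{0\}$ and $W(x)\to+\infty$ as $x\to0$) and $V$ is lower semicontinuous by assumption, $I(\mu)$ is then a well-defined element of $\R\cup\{+\infty\}$ for every $\mu\in\P(\R^2)$.

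For existence I would first check that $\inf I<+\infty$: by \eqref{V2} and the countable subadditivity of capacity there is $n\in\N$ with $\cpc(\{V\le n\})>0$, hence a compact $K\subset\{V\le n\}$ with $\cpc(K)>0$, and by the definition of logarithmic capacity a measure $\nu_0\in\P(K)$ with finite $W_0$-energy; then \eqref{bddW} and $V\le n$ on $K$ give $I(\nu_0)<+\infty$. Given a minimising sequence $(\mu_k)$, the lower bound on the integrand yields
$$
I(\mu_k)\ \ge\ C_1+2\int_{\R^2}\Big(\tfrac12 V(x)-\log(1+|x|)\Big)\,d\mu_k(x),
$$
and since $\frac12 V(x)-\log(1+|x|)$ is bounded below and tends to $+\infty$ as $|x|\to+\infty$ by \eqref{V1}, the uniform bound on $I(\mu_k)$ forces $(\mu_k)$ to be tight. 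By Prokhorov's theorem a subsequence converges narrowly to some $\mu\in\P(\R^2)$; then $\mu_k\otimes\mu_k\wto\mu\otimes\mu$ narrowly, and lower semicontinuity of the integral functional (the integrand being lower semicontinuous and bounded below) gives $I(\mu)\le\liminf_k I(\mu_k)=\inf I$. Hence $\mu$ is a minimiser, with $I(\mu)\le I(\nu_0)<+\infty$.

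Compactness of the support I would obtain by the standard truncation argument: for large $R$, replace the restriction of $\mu$ to $\R^2\setminus B_R$ by $\mu(\R^2\setminus B_R)$ times a fixed probability measure of finite energy concentrated near the origin, and compare the energy of this competitor with $I(\mu)$; since the kernel grows only logarithmically at infinity whereas $\frac12 V(x)-\log|x|\to+\infty$, the competitor has strictly smaller energy unless $\mu(\R^2\setminus B_R)=0$, so $\supp\mu$ is bounded, hence compact. Consequently $W$ is bounded below on $(\supp\mu)^2$, so $\int V\,d\mu<+\infty$ and $W\ast\mu$ is a well-defined lower semicontinuous function. The Euler--Lagrange conditions then come from a first variation: for $\nu\in\P(\R^2)$ with compact support and finite energy, $t\mapsto I((1-t)\mu+t\nu)$ is a polynomial of degree at most two with a minimum at $t=0$ on $[0,1]$, so its right derivative there is nonnegative, which after simplification reads
$$
\int_{\R^2}\Big((W\ast\mu)(x)+\tfrac12 V(x)\Big)\,d\nu(x)\ \ge\ \int_{\R^2}\Big((W\ast\mu)(x)+\tfrac12 V(x)\Big)\,d\mu(x)\ =:\ c.
$$
To pass from this integrated inequality to the pointwise bound \eqref{EL2} I would argue by contradiction: if $\{(W\ast\mu)+\frac12 V<c\}$ had positive capacity, then, intersecting it with $\{V\le n\}$ for suitable $n$ and using that sublevel sets of the lower semicontinuous function $(W\ast\mu)+\frac12 V$ are closed, I could extract a compact $K$ with $\cpc(K)>0$, $K\subset\{V\le n\}$, on which $(W\ast\mu)+\frac12 V\le c-\varepsilon$; a probability measure $\nu_K$ on $K$ with finite $W_0$-energy then has finite energy, finite cross-energy with $\mu$, and satisfies $\int((W\ast\mu)+\frac12 V)\,d\nu_K\le c-\varepsilon<c$, contradicting the displayed inequality. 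Hence \eqref{EL2} holds; since measures of finite energy do not charge sets of zero capacity, $(W\ast\mu)+\frac12 V\ge c$ holds $\mu$-a.e.\ as well, and as its $\mu$-average equals $c$ it must be $\equiv c$ $\mu$-a.e., which is \eqref{EL1}.

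The step I expect to be the main obstacle is precisely this upgrade from the integrated variational inequality to the quasi-everywhere inequality \eqref{EL2}: it is the point where potential theory genuinely enters, through the lower semicontinuity of $W\ast\mu$, the existence of finite-energy measures on sets of positive capacity, and the countable subadditivity of capacity, and one has to be careful that every competitor measure used has finite energy and is supported where $V$ is finite. The truncation argument underlying compactness of the support is also somewhat delicate, since one must control the interaction between the far-field mass and the bulk.
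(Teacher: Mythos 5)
Your proposal is correct and follows the same overall route as the paper: direct method for existence, tightness from the coercivity encoded in \eqref{V1}, first variation to get the integrated inequality, and a capacity-based contradiction argument to upgrade it to the quasi-everywhere bound \eqref{EL2}, with \eqref{EL1} then following because finite-energy measures do not charge polar sets. The one place where you genuinely deviate from the paper is the compactness of the support. You propose to redistribute the far-field mass onto a fixed reference measure near the origin, which requires controlling the cross-interaction between the far-field mass and both the bulk of $\mu$ and the reference measure. The paper instead uses the cleaner normalised-restriction competitor $\tilde\mu = \mu(K)^{-1}\,\mu\mres K$, where $K$ is a compact set chosen so that the full integrand $W(x-y)+\tfrac12 V(x)+\tfrac12 V(y)$ exceeds $I(\mu)+1$ outside $K\times K$; a one-line estimate then gives $I(\tilde\mu)\le I(\mu)+1-\mu(K)^{-2}<I(\mu)$ whenever $\mu(K)<1$, with no cross-terms to manage. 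Your variant is also standard (it is closer to the Saff--Totik presentation), but the paper's version avoids the extra bookkeeping, and it is worth knowing both. One further small remark: in the Euler--Lagrange contradiction step your intersection with $\{V\le n\}$ is not actually needed, because on a compact set where $(W\ast\mu)+\tfrac12 V<c$ the bound from below of $W\ast\mu$ already forces $V$ to be finite there; the paper exploits this directly to bound the confinement energy of the competitor measure.
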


\begin{proof}
The function $(x,y)\mapsto W(x-y)+\frac12 V(x)+\frac12 V(y)$ is lower semicontinuous and blows up at infinity by \eqref{V1}, hence it is bounded from below by a constant $-c_1$ with $c_1>0$.
Therefore, the energy $I$ in \eqref{realdef} is well defined (possibly equal to $+\infty$) and $\inf I>-\infty$. Note also that the two representations \eqref{int:I}
and \eqref{V1} coincide whenever the interaction energy is well defined and not equal to $-\infty$.

Assumption \eqref{V2} guarantees that $\inf I<+\infty$. Indeed, writing $\{V<+\infty\}$ as the union over $n\in\N$ of the compact sets $\{V\leq n\}$, we must have
$\cpc\big(\{V\leq n_0\}\big)>0$ for some $n_0\in\N$, that is, there exists a probability measure $\mu_0$ with support in $\{V\leq n_0\}$ such that
$$
\iint_{\R^2\times\R^2}W_0(x-y)\,d\mu_0(x)d\mu_0(y)<+\infty.
$$
By \eqref{bddW} this implies that
$$
\iint_{\R^2\times\R^2}W(x-y)\,d\mu_0(x)d\mu_0(y)<+\infty.
$$
On the other hand,
$$
\int_{\R^2}V(x)\,d\mu_0(x)\leq n_0,
$$
hence $I(\mu_0)<+\infty$.

Existence of a minimiser follows by the Direct Method of the Calculus of Variations. Let $(\mu_n)_n$ be a minimising sequence. Since
$\inf I<+\infty$, there exists a constant $C>0$ such that $I(\mu_n)\leq C$ for every $n\in\N$. By 
\eqref{V1} for every $M>0$ there exists a compact set $K\subset\R^2$ such that
$$
W(x-y)+\frac12 V(x)+\frac12 V(y)\geq M \quad \text{ for } (x,y)\not\in K\times K.
$$
Therefore, for every $n\in N$
\begin{eqnarray*}
C \ \geq \ I(\mu_n) & \geq & M(\mu_n\otimes\mu_n)\big((K\times K)^c\big) -c_1\big(\mu_n(K)\big)^2
\\
& \geq & M\big(1- \big(\mu_n(K)\big)^2\big) -c_1
\\
& \geq & M \mu_n(K^c) -c_1.
\end{eqnarray*}
This inequality implies that the sequence $(\mu_n)_n$ is tight, hence, up to subsequences, $(\mu_n)_n$ converges narrowly to some $\mu\in\P(\R^2)$ (we refer to \cite[Chapter~5]{AGS} for the definition of tightness, narrow convergence, and their properties).
Since the integrand in $I$ is lower semicontinuous and bounded from below, we have
$$
I(\mu)\leq\liminf_{n\to\infty} I(\mu_n),
$$
hence $\mu$ is a minimiser.

Let now $\mu$ be a minimiser. In particular, $I(\mu)<+\infty$. By 
\eqref{V1} there exists a compact set $K\subset\R^2$ such that
\begin{equation}\label{bd-cpt}
W(x-y)+\frac12 V(x)+\frac12 V(y)\geq I(\mu)+1 \quad \text{ for } (x,y)\not\in K\times K.
\end{equation}
By taking $K$ larger if needed, we can assume that $\mu(K)>0$. We claim that $\supp\mu\subset K$.
Assume by contradiction that $\mu(K)<1$ and define
$$
\tilde\mu:=\frac{1}{\mu(K)}\mu\mres K \in\P(\R^2).
$$
By \eqref{bd-cpt} we deduce that
\begin{eqnarray*}
I(\tilde\mu) & = & \frac{1}{\big(\mu(K)\big)^2}\left(I(\mu) - \iint_{(K\times K)^c}\Big(W(x-y)+\frac12 V(x)+\frac12 V(y)\Big)\,d\mu(y)d\mu(x)
\right)
\\
& \leq & \frac{1}{\big(\mu(K)\big)^2} \Big( I(\mu) -(I(\mu)+1)(1-\big(\mu(K)\big)^2) \Big)
\\
& = & I(\mu) +1-\frac1{\big(\mu(K)\big)^2} <I(\mu),
\end{eqnarray*}
where the last inequality follows from the fact that $\mu(K)<1$. This contradicts the minimality of~$\mu$.

We conclude by showing that $\mu$ satisfies \eqref{EL1}--\eqref{EL2}. Let $\nu\in\P(\R^2)$ be a competitor such that its support is compact and $I(\nu)<+\infty$.
For $\e\in(0,1)$ we have that $(1-\e)\mu+\e\nu\in\P(\R^2)$. Therefore, by minimality
$$
I(\mu) \leq I((1-\e)\mu+\e\nu).
$$
Expanding the energy at the right-hand side yields
$$
0\leq -2\e\int_{\R^2} W\ast\mu\, d\mu +2\e\int_{\R^2} W\ast\mu\, d\nu +
\e\int_{\R^2}V\,d(\nu-\mu)+O(\e^2).
$$
Dividing by $2\e$ and sending $\e\to0^+$ lead to the following inequality:
\begin{equation}\label{EL2int}
\int_{\R^2} \Big(W\ast\mu+\frac12 V\Big)\, d\nu\geq \int_{\R^2} \Big(W\ast\mu+\frac12V\Big)\, d\mu =:c
\end{equation}
for every $\nu\in\P(\R^2)$ with compact support and $I(\nu)<+\infty$.

Set $P:=W\ast\mu+\frac12 V$ and assume by contradiction that
$$
\cpc\big(\big\{x\in\R^2: \ P(x)<c\big\}\big)>0.
$$
Note that by Fatou's lemma $P$ is lower semicontinuous, thus $\{P<c\}$ is a Borel set. By definition of capacity there exists a compact set $K\subset\R^2$ of positive capacity such that $P(x)<c$ for every $x\in K$. Therefore, there exists $\nu\in\P(K)$ such that
$$
\iint_{\R^2\times\R^2}W(x-y)\,d\nu(x)d\nu(y)<+\infty.
$$
Moreover, the confinement energy of $\nu$ is also finite, since
\begin{equation}\label{EL2int-f}
\int_{\R^2}\frac12V\, d\nu  =  \int_K \frac12V\, d\nu
< \int_K (c-W\ast\mu)\, d\nu
= c-\int_K W\ast\mu\, d\nu 
\end{equation}
and the right-hand side is finite by the bound from below of $W$ on compact sets.
Having finite energy and compact support, the measure $\nu$ has to satisfy condition \eqref{EL2int}. However, \eqref{EL2int-f} contradicts \eqref{EL2int}.
This proves \eqref{EL2}.

To prove \eqref{EL1} we note that by \eqref{EL2} we have $W\ast\mu+\frac12V\geq c$ q.e., hence $\mu$-a.e..
On the other hand, $c$ is by definition the integral of $W\ast\mu+\frac12V$ with respect to $\mu$, so necessarily \eqref{EL1} holds true.
\end{proof}

\begin{remark}
The previous result applies to much more general interaction kernels. Note in particular that the only properties of $\kappa$ we used are boundedness and lower semicontinuity.
\end{remark}

\end{section}

\begin{section}{The uniqueness result}\label{sec:uniq}

As mentioned in the introduction, the key assumption to guarantee uniqueness of minimiser is the sign condition $\widehat W\geq0$ outside $0$.
In fact, this assumption implies the strict convexity of the energy (when restricted to a suitable class). 
This result bears some similarities to Bochner Theorem, which characterises functions of positive type 
as those whose Fourier transform is a positive finite measure, see, e.g., \cite{Wil}. Functions of positive type provide in fact interaction kernels whose corresponding energies are convex on discrete measures. However, the regularity of $W$ and $\widehat W$ does not allow for a direct application of this theorem in our context.

Once strict convexity is established, not only the minimiser is unique, but
the two Euler-Lagrange conditions \eqref{EL1}--\eqref{EL2} are equivalent to minimality. Thus, identifying the unique minimiser reduces to finding a measure satisfying 
\eqref{EL1}--\eqref{EL2}. In order to do so it is essential to have an expression for $W\ast\mu$, in particular inside the support of $\mu$ where we need to verify \eqref{EL1}.
As we will see, the main idea is to rely again on Fourier analysis applying the inversion formula for the Fourier transform. 

\subsection{The Fourier transform}
We denote the Schwartz space of rapidly decreasing functions by $\S$ and its dual space by $\S'$, the so-called space of tempered distributions.
We recall that for every $\varphi\in \S$ its Fourier transform $\widehat\varphi\in\S$ is defined as
$$
\widehat \varphi(\xi)=\frac1{2\pi}\int_{\R^2}\varphi(x)e^{-i\xi\cdot x}\, dx \quad \text{ for }\xi\in\R^2.
$$
The map $\varphi\mapsto \widehat \varphi$ is a continuous linear isomorphism (with continuous inverse) of $\S$ into itself.
This allows one to extend the definition of Fourier transform to elements of $\S'$ simply by duality, that is, the Fourier transform $\widehat u$ of an element $u\in\S'$
is defined as
$$
\langle\, \widehat u, \varphi \rangle= \langle u, \widehat \varphi\, \rangle\quad \text{ for every }\varphi\in\S.
$$
We recall that the Fourier transform of the tempered distribution $\delta_0$ (the Dirac delta at $0$) is the constant $1/(2\pi)$.
Moreover, for every $\varphi\in \S$ and $j=1,2$ we have
$$
\widehat{\partial_{x_j}\varphi}(\xi)=i\xi_j\widehat\varphi(\xi) \quad \text{ for }\xi\in\R^2.
$$
 
\subsection{The Fourier transform of the kernel $W$} 
Both $W$ and $W_0$ are locally integrable functions with sublinear growth at infinity, hence they are tempered distributions.
Thus, their Fourier transforms are well defined as tempered distributions.
We start by computing the Fourier transform of the Coulomb kernel $W_0$. 
Since $\Delta W_0=-2\pi\delta_0$, formally we have
$$
-|\xi|^2\widehat{W_0}(\xi)=\widehat{\Delta W_0}(\xi)=-2\pi\widehat{\delta_0}(\xi)=-1.
$$ 
Hence, we would expect $\widehat{W_0}(\xi)=1/|\xi|^2$. However, this function is not integrable at $0$ and so, it does not define a tempered distribution.
In fact, the correct expression of $\widehat{W_0}$ is the following:
\begin{equation}\label{hatlog}
\langle \widehat{W_0}, \varphi\rangle =  c_0\varphi(0) +\int_{|\xi|\leq1}\frac1{|\xi|^2}(\varphi(\xi)-\varphi(0))\, d\xi
+\int_{|\xi|>1}\frac1{|\xi|^2}\varphi(\xi)\, d\xi
\end{equation}
for every $\varphi\in\S$. Here $c_0=\frac1{2\pi}(\gamma+\log\pi)$, where $\gamma$ is the Euler constant. For the proof of this formula we refer to \cite{Fol} or \cite{MRS}.

To compute the Fourier transform of the anisotropic kernel $\kappa$, it is convenient to pass to complex variables to simplify notation.
We replace $x$ by $z\in\C$ and write $z=|z|e^{i\theta}$ with $\theta\in[0,2\pi]$.
Since $\kappa$ is $0$-homogeneous, we have that $\kappa(z)=\kappa(e^{i\theta})$ and thus, it can be written as a Fourier series for $\theta\in[0,2\pi]$.
Moreover, since $\kappa$ is even, its Fourier series contains only the even terms, that~is,
$$
\kappa(e^{i\theta})=a_0+\sum_{n=1}^\infty \big( a_{2n}\cos(2n\theta) + b_{2n}\sin(2n\theta) \big)
$$
with $(a_{2n})_{n\in\N}, (b_{2n})_{n\in\N}\in\ell^2$. Note that for $n\in\N$
$$
\cos(n\theta)=\text{Re}\frac{z^n}{|z|^n}=:\frac{\phi_n(z)}{|z|^n}
\quad \text{ and } \quad \sin(n\theta)=\text{Im}\frac{z^n}{|z|^n}=:\frac{\psi_n(z)}{|z|^n}.
$$
The functions $\phi_n,\psi_n$ are harmonic homogeneous polynomials of order $n$ (in fact, they correspond to the so-called spherical harmonics in dimension $2$).
We can thus rewrite
\begin{equation}\label{Fourier-kappa}
\kappa(z)=a_0+\sum_{n=1}^\infty \Big( a_{2n}\frac{\phi_{2n}(z)}{|z|^{2n}} + b_{2n}\frac{\psi_{2n}(z)}{|z|^{2n}} \Big).
\end{equation}
Without loss of generality we can assume that $a_0=0$. In fact, adding a constant to $\kappa$ does not affect the minimisation problem under study.
The expression \eqref{Fourier-kappa} is particularly convenient to compute the Fourier transform of $\kappa$, owing to the following result, whose proof can be found in~\cite{S}.

\begin{lemma}\label{lemmaStein}
Let $\phi$ be a harmonic homogeneous polynomial of degree $m\geq1$ in $\R^2$.
Then the Fourier transform of $\phi(x)/|x|^m$ is given by 
\begin{equation}\label{stein}
\gamma_m\frac{\phi(\xi)}{|\xi|^{m+2}}
\end{equation}
for a suitable constant $\gamma_m$. For $m$ even, $m=2n$, one has $\gamma_{2n}=(-1)^n2n$.
\end{lemma}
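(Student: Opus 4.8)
This is Lemma~\ref{lemmaStein}, the classical Bochner--Stein formula for the Fourier transform of a solid spherical harmonic divided by the appropriate power of $|x|$; I would prove it in essentially the way it is done in Stein--Weiss. The strategy is to exploit the symmetry of the expression: the function $f(x)=\phi(x)/|x|^m$ is $0$-homogeneous, and more importantly it transforms under rotations the same way $\phi$ does, so its Fourier transform must have the form $\gamma_m\,\phi(\xi)/|\xi|^{m+2}$ for \emph{some} constant, and the whole content of the lemma is the identification of that constant.

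\textbf{Step 1: Reduce to a scalar identity via Bochner's relation.} First I would recall (or prove) the fundamental fact that if $\phi$ is a harmonic homogeneous polynomial of degree $m$ and $g$ is a radial function on $\R^2$, then the Fourier transform of $\phi(x)g(|x|)$ is again of the form $i^{-m}\phi(\xi)G(|\xi|)$, where $G$ is the radial Fourier transform of $g$ computed in dimension $2+2m$ (Bochner's relation). Concretely, for $\phi$ of degree $m=2n$ in dimension $2$ one has $i^{-2n}=(-1)^n$, which already explains the sign $(-1)^n$ in the statement. The plan is then to apply this with $g$ a suitable approximation of $|x|^{-m}$ — for instance $g_\varepsilon(r)=r^{-m}e^{-\varepsilon r}$ or a Riesz-type regularisation $|x|^{-m+\delta}$ — and pass to the limit, since $\phi(x)/|x|^m$ is locally integrable (degree of $\phi$ cancels the singularity) and has compensated growth at infinity, so it is a well-defined tempered distribution.

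\textbf{Step 2: Evaluate the constant.} The cleanest way to pin down $\gamma_m$ is to test the distributional identity $\widehat{\phi(\cdot)/|\cdot|^m}=\gamma_m\,\phi(\cdot)/|\cdot|^{m+2}$ against a single convenient Schwartz function, the Gaussian $\varphi(x)=e^{-|x|^2/2}$, whose Fourier transform is again (a multiple of) a Gaussian. Using $\langle\widehat u,\varphi\rangle=\langle u,\widehat\varphi\rangle$ this turns into the scalar identity
\begin{equation*}
\int_{\R^2}\frac{\phi(x)}{|x|^m}e^{-|x|^2/2}\,dx=\gamma_m\int_{\R^2}\frac{\phi(\xi)}{|\xi|^{m+2}}e^{-|\xi|^2/2}\,d\xi,
\end{equation*}
and both sides are elementary: in polar coordinates the angular part factors out as $\int_{\mathbb S^1}\phi(\omega)\,d\omega$ times a one-dimensional $\Gamma$-integral (after writing $\phi(x)=|x|^m\phi(x/|x|)$), so the ratio of the two sides gives $\gamma_m$ in terms of Gamma functions. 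Carrying out the $\Gamma$-function bookkeeping in dimension $2$ with $m=2n$ collapses to $\gamma_{2n}=(-1)^n\,2n$.

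\textbf{Main obstacle.} The genuinely delicate point is not the constant but the distributional justification: $\phi(x)/|x|^m$ is not integrable at infinity, so one must be careful that the regularisation-and-limit procedure (or the Gaussian-testing shortcut) is legitimate as an identity of tempered distributions, and in particular that no Dirac mass at the origin is produced — here it is not, because both $\phi(x)/|x|^m$ and $\phi(\xi)/|\xi|^{m+2}$ are genuinely locally integrable (the vanishing of the harmonic polynomial at $0$ kills the singularity, unlike the case of $\widehat{W_0}$ where a $\delta_0$ \emph{does} appear). Since the paper only needs this as a black box — and indeed explicitly defers to \cite{S} — I would keep the proof short: state Bochner's relation, reduce to the Gaussian computation, and record the value of $\gamma_{2n}$, referring the reader to Stein--Weiss for the full details of the tempered-distribution argument.
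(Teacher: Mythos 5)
The paper does not prove this lemma; it simply cites Stein's book. So there is no "paper's proof" to compare against, but your proposal contains two concrete errors that would sink it as written.

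First, your "Main obstacle" paragraph asserts that $\phi(\xi)/|\xi|^{m+2}$ is "genuinely locally integrable" because the vanishing of $\phi$ at the origin kills the singularity. This is false, and the paper says so explicitly in the paragraph immediately following the lemma: since $\phi$ is homogeneous of degree $m$, near the origin $|\phi(\xi)|/|\xi|^{m+2}\sim|\xi|^m/|\xi|^{m+2}=1/|\xi|^2$, which is \emph{not} locally integrable in $\R^2$. The right-hand side of \eqref{stein} therefore has to be interpreted as a regularised distribution, with the $\varphi(0)$-subtraction near $0$, exactly as in \eqref{hatlog}. The reason no Dirac mass is picked up (unlike for $\widehat{W_0}$) is not local integrability but the mean-zero property $\int_{\mathbb{S}^1}\phi\,d\HH^1=0$ of a non-constant spherical harmonic, which makes the principal value well defined and kills the ambiguity that produces the $c_0\delta_0$ term in \eqref{hatlog}.

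Second, and more fatally for your Step 2, the same mean-zero property wrecks the Gaussian test. Writing $x=r\omega$, the left-hand side of your proposed identity factors as
\[
\int_{\R^2}\frac{\phi(x)}{|x|^m}e^{-|x|^2/2}\,dx
=\Bigl(\int_{\mathbb{S}^1}\phi(\omega)\,d\HH^1(\omega)\Bigr)\Bigl(\int_0^\infty r e^{-r^2/2}\,dr\Bigr)=0,
\]
and the right-hand side vanishes for the same angular reason (interpreted as a principal value). So you get $0=\gamma_m\cdot 0$, which pins down nothing. To make the testing argument work you would need a non-radial test function whose Fourier transform you control, the natural choice being $\varphi(x)=\phi(x)e^{-|x|^2/2}$: by Bochner's relation $\widehat{\varphi}(\xi)=i^{-m}\phi(\xi)e^{-|\xi|^2/2}$, and now the angular parts of both sides are $\int_{\mathbb{S}^1}\phi(\omega)^2\,d\HH^1(\omega)\neq0$, so the Gamma-function bookkeeping actually yields $\gamma_m$. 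Your Step 1 (Bochner's relation plus regularisation of $|x|^{-m}$, identifying the sign $(-1)^n=i^{-2n}$) is the right starting point, but as written Step 2 proves nothing and the local-integrability claim has to be retracted.
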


Note that $\phi(\xi)/|\xi|^{m+2}$ behaves as $1/|\xi|^2$ for $\xi$ close to zero, therefore it is not integrable at $0$. Formula \eqref{stein} has to be interpreted as in \eqref{hatlog}, that is,
$$
\langle \gamma_m\frac{\phi}{|\cdot|^{m+2}}, \varphi\rangle =  \gamma_m\int_{|\xi|\leq1}\frac{\phi(\xi)}{|\xi|^{m+2}}(\varphi(\xi)-\varphi(0))\, d\xi
+\gamma_m \int_{|\xi|>1}\frac{\phi(\xi)}{|\xi|^{m+2}}\varphi(\xi)\, d\xi
$$
for every $\varphi\in\S$. 

By Lemma~\ref{lemmaStein} and \eqref{Fourier-kappa} with $a_0=0$ we deduce that
\begin{equation}\label{hatk}
\widehat\kappa(\xi)= \sum_{n=1}^\infty \Big( (-1)^n2na_{2n}\frac{\phi_{2n}(\xi)}{|\xi|^{2n+2}} + (-1)^n2nb_{2n}\frac{\psi_{2n}(\xi)}{|\xi|^{2n+2}} \Big).
\end{equation}
The argument can be made rigorous if $(2na_{2n})_{n\in\N}, (2nb_{2n})_{n\in\N}\in\ell^2$, that is, if $\kappa\in H^1(\mathbb S^1)$.

By \eqref{hatlog} and \eqref{hatk} we can write 
\begin{equation}\label{def:Omega}
\widehat W(\xi)=c_0\delta_0+\frac1{|\xi|^2}+\frac{\widehat\kappa(\xi/|\xi|)}{|\xi|^2}=: c_0\delta_0+\frac{\Om(\xi)}{|\xi|^2},
\end{equation}
where, with an abuse of notation, $\Om$ denotes the ``angular part'' of $\widehat W$. Note that $\Om$ is even and $0$-homogeneous, and that the above formula has to be interpreted as in \eqref{hatlog}.
In the following we will need $\Om\in C^0(\mathbb S^1)$. By Sobolev embedding this is true if $\widehat\kappa\in H^p(\mathbb S^1)$
with $p>1/2$. This is guaranteed by our assumption $\kappa\in H^s(\mathbb S^1)$ with $s>3/2$, owing to \eqref{hatk}. 

In the following we will refer to $\Om$ as the even and $0$-homogeneous function given by formula \eqref{def:Omega}.
Note that the purely Coulomb case corresponds to $\Om\equiv1$. Moreover, by \eqref{hatk} and \eqref{def:Omega} we have
\begin{equation}\label{int-Omega}
\frac1{2\pi}\int_{\mathbb S^1} \Om(\xi)\, d\HH^1(\xi)= \frac1{2\pi}\int_{\mathbb S^1}  \big( 1+\widehat\kappa(\xi) \big)\, d\HH^1(\xi)= 1.
\end{equation}

\begin{prop}\label{prop:pars}
Assume $\Om\geq0$ on $\mathbb S^1$. Let $\mu_0,\mu_1\in\P(\R^2)$ be two measures with compact support and finite interaction energy.
Let $\nu:=\mu_0-\mu_1$. Then
\begin{equation}\label{pars}
\int_{\R^2}W\ast\nu\, d\nu =2\pi\int_{\R^2}\frac{\Om(\xi)}{|\xi|^2}|\widehat\nu(\xi)|^2\, d\xi.
\end{equation}
In particular, the left-hand side is non-negative and
is equal to zero if and only if $\nu=0$, that is, if $\mu_0=\mu_1$.
\end{prop}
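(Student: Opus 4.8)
The plan is to prove the identity \eqref{pars} by writing $W=W_0+\kappa$, establishing a Parseval-type formula for each piece separately, and then reading off the sign and the rigidity. The two structural facts that drive everything are: (i) $\nu=\mu_0-\mu_1$ has compact support and \emph{zero total mass}, so $\widehat\nu$ is the restriction to $\R^2$ of a real-analytic function and $\widehat\nu(0)=\tfrac1{2\pi}\nu(\R^2)=0$; and (ii) all pairwise energies are finite --- the self-energies $\iint W\,d\mu_i\,d\mu_i$ by hypothesis, hence $\iint(-\log|x-y|)\,d\mu_i\,d\mu_i<+\infty$ by \eqref{bddW}, and the mutual energies $\iint W\,d\mu_0\,d\mu_1$ by the energy principle of classical potential theory (see \cite{SaTo}). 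Thus $\int W\ast\nu\,d\nu=\iint W(x-y)\,d\nu(x)d\nu(y)$ is a well-defined real number, and since $\nu$ has finite logarithmic energy it charges no set of zero capacity, in particular no point, so $(\nu\otimes\nu)(\{x=y\})=0$.

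For the Coulomb part I would invoke the classical identity $\iint(-\log|x-y|)\,d\nu(x)d\nu(y)=2\pi\int_{\R^2}|\widehat\nu(\xi)|^2/|\xi|^2\,d\xi$ for compactly supported zero-mass $\nu$ of finite logarithmic energy, which follows from \eqref{hatlog} (see \cite{MRS,SaTo}; alternatively one derives it by mollification, using that the mollifications of the superharmonic function $-\log|\cdot|$ increase to it, so monotone convergence applies to $\nu^\pm\otimes\nu^\pm$ and $\nu^+\otimes\nu^-$ separately). In particular $\int_{\R^2}|\widehat\nu(\xi)|^2/|\xi|^2\,d\xi<+\infty$. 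For the anisotropic part, $\kappa\in L^\infty(\R^2)$ (indeed $\kappa\in C^0(\R^2\setminus\{0\})$ by Sobolev embedding), so $\iint\kappa(x-y)\,d\nu(x)d\nu(y)$ converges absolutely, and the claim is
\[
\iint_{\R^2\times\R^2}\kappa(x-y)\,d\nu(x)d\nu(y)=2\pi\int_{\R^2}\widehat\kappa(\xi)\,|\widehat\nu(\xi)|^2\,d\xi,
\]
where the right-hand side converges absolutely since $|\widehat\kappa(\xi)|=|\Om(\xi)-1|/|\xi|^2\le C/|\xi|^2$ while $|\widehat\nu(\xi)|^2/|\xi|^2$ is bounded near $0$ ($\widehat\nu$ smooth, $\widehat\nu(0)=0$) and integrable at infinity by the previous step. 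To prove it I would mollify: fix an even, non-negative $\varphi\in C^\infty_c(\R^2)$ with $\int\varphi=1$, put $\varphi_\delta(x)=\delta^{-2}\varphi(x/\delta)$, $\nu_\delta=\nu\ast\varphi_\delta\in\S$, and $g_\delta:=\nu_\delta\ast\widetilde{\nu_\delta}\in\S$ with $\widetilde f(x):=f(-x)$; then $g_\delta$ is even, $\widehat{g_\delta}=2\pi|\widehat{\nu_\delta}|^2$, and $\widehat{g_\delta}(0)=2\pi|\widehat{\nu_\delta}(0)|^2=0$ because $\widehat{\nu_\delta}(0)=2\pi\widehat\nu(0)\widehat{\varphi_\delta}(0)=0$. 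Using $\langle\kappa,g_\delta\rangle=\langle\widehat\kappa,\widehat{g_\delta}\rangle$ (valid since $g_\delta$ is even) and interpreting $\widehat\kappa$ as in \eqref{stein} (the subtraction of the value at $0$ being immaterial here), one gets $\iint\kappa(x-y)\,d\nu_\delta(x)d\nu_\delta(y)=2\pi\int_{\R^2}\widehat\kappa(\xi)|\widehat{\nu_\delta}(\xi)|^2\,d\xi$. Letting $\delta\to0$, the left side equals $\iint(\kappa\ast(\varphi_\delta\ast\varphi_\delta))(x-y)\,d\nu(x)d\nu(y)\to\iint\kappa(x-y)\,d\nu(x)d\nu(y)$ by dominated convergence (as $\kappa\ast(\varphi_\delta\ast\varphi_\delta)\to\kappa$ pointwise on $\R^2\setminus\{0\}$ with a uniform $L^\infty$ bound and $(\nu\otimes\nu)(\{x=y\})=0$), while the right side $\to2\pi\int\widehat\kappa|\widehat\nu|^2$ by dominated convergence, since $|\widehat{\nu_\delta}|^2=|\widehat\nu|^2\,|2\pi\widehat\varphi(\delta\cdot)|^2\le|\widehat\nu|^2$ converges pointwise to $|\widehat\nu|^2$ and $|\widehat\kappa|\,|\widehat\nu|^2\in L^1(\R^2)$.

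Adding the two identities and using $\Om(\xi)=1+|\xi|^2\widehat\kappa(\xi)$ from \eqref{def:Omega} gives $\int_{\R^2}W\ast\nu\,d\nu=2\pi\int_{\R^2}\Om(\xi)|\widehat\nu(\xi)|^2/|\xi|^2\,d\xi$ (this part of the statement does not even use the sign of $\Om$). Since $\Om\ge0$ on $\mathbb S^1$, hence on $\R^2\setminus\{0\}$ by $0$-homogeneity, the right-hand side is non-negative; and it vanishes iff $\widehat\nu=0$ a.e.\ on $\{\Om>0\}$, which has positive Lebesgue measure because $\tfrac1{2\pi}\int_{\mathbb S^1}\Om\,d\HH^1=1$ by \eqref{int-Omega}. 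As $\widehat\nu$ is real-analytic, a zero set of positive measure forces $\widehat\nu\equiv0$, whence $\nu=0$; the converse is trivial.

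The main obstacle is making the formal computation \eqref{int:formal} rigorous: $\widehat W$ is a genuine tempered distribution carrying a Dirac mass at the origin and a non-$L^1_{\mathrm{loc}}$ part $\sim|\xi|^{-2}$ there, and neither $\mu_0$ nor $\mu_1$ has $|\widehat{\mu_i}|^2/|\xi|^2$ integrable at the origin. Both difficulties dissolve once one exploits $\widehat\nu(0)=0$ (which annihilates the Dirac contribution and makes $|\widehat\nu(\xi)|^2/|\xi|^2$ bounded near $0$), combined with the mollification that reduces all pairings to Schwartz test functions; the only remaining delicate passage is the limit on the Coulomb side, which is why I prefer to quote the classical logarithmic-energy identity (or use monotone convergence for the superharmonic mollifications) rather than re-run the mollification argument there. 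A secondary, smaller point is the rigidity: analyticity of $\widehat\nu$ is essential to upgrade a.e.-vanishing on the possibly proper cone $\{\Om>0\}$ to $\widehat\nu\equiv0$.
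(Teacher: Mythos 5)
Your proof is correct but follows a genuinely different route from the paper's. You split $W=W_0+\kappa$ and prove a Parseval identity for each piece on its own: for the Coulomb part you invoke (or re-derive) the classical logarithmic-energy formula $\iint(-\log|x-y|)\,d\nu\,d\nu=2\pi\int|\widehat\nu|^2/|\xi|^2$ for compactly supported, zero-mass, finite-energy $\nu$; for the $\kappa$ part you exploit that $\kappa$ is bounded and continuous off the origin, which lets you close the mollification argument by a routine dominated-convergence passage (using that $(|\nu|\otimes|\nu|)(\{x=y\})=0$) and dominate the Fourier-side integrands by $|\widehat\kappa|\,|\widehat\nu|^2\in L^1$. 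The paper instead mollifies $W$ itself, applies Plancherel to $W\ast\nu_\e$ and $\nu_\e$ in one stroke, and then passes to the limit on the physical side by combining superharmonicity of $W_0$ with the two-sided sandwich \eqref{bddW} to get a uniform dominating function on the relevant compact set, expanding $\nu\otimes\nu$ into its four signed pairings; on the Fourier side it uses Fatou together with the bound $|\widehat{\nu_\e}|\leq|\widehat\nu|$. Your decomposition buys modularity: the ``hard'' Coulomb limit is isolated and recognised as classical, the $\kappa$ piece becomes elementary, and — as you observe — the identity \eqref{pars} comes out without any sign hypothesis on $\Om$, whereas the paper's Fatou step on the Fourier side leans on $\Om\geq0$. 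The paper's version buys a single self-contained mollification. The rigidity step is essentially the same in both (Paley--Wiener/real-analyticity of $\widehat\nu$); you conclude from vanishing on the positive-measure set $\{\Om>0\}$ while the paper extracts a ball $B_\delta(\xi_0)$ where $\Om>\eta$ by continuity — both are fine. One small caveat: the parenthetical claim that mollifications of $-\log|\cdot|$ \emph{increase} to it is not automatic for a generic compactly supported radial mollifier; it requires a self-similar family (Gaussians, say) or else the domination route used by the paper. Since you also cite the classical identity directly, this does not affect the argument.
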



An immediate consequence of Proposition~\ref{prop:pars} is the following: if $\Om\geq0$ on $\mathbb S^1$,
then $I$ is strictly convex on the class of measures with compact support and finite interaction energy.
Indeed, let $\mu_0,\mu_1\in\P(\R^2)$ be two such measures with $\mu_0\neq\mu_1$. By Proposition~\ref{prop:pars}
we have that
$$
\int_{\R^2}W\ast(\mu_0-\mu_1)\, d(\mu_0-\mu_1)>0,
$$
hence
$$
\int_{\R^2}W\ast\mu_0\, d\mu_0 +\int_{\R^2}W\ast\mu_1\, d\mu_1 > 2 \int_{\R^2}W\ast\mu_0\, d\mu_1.
$$
Let now $\mu_t:=(1-t)\mu_0+t\mu_1$ with $t\in(0,1)$. From the above inequality we conclude that
\begin{eqnarray*}
\int_{\R^2}W\ast\mu_t\, d\mu_t & < & (1-t)^2\int_{\R^2}W\ast\mu_0\, d\mu_0 +t^2 \int_{\R^2}W\ast\mu_1\, d\mu_1
\\
& & {}+t(1-t) \Big(\int_{\R^2}W\ast\mu_0\, d\mu_0 +\int_{\R^2}W\ast\mu_1\, d\mu_1\Big)
\\
& = & (1-t)\int_{\R^2}W\ast\mu_0\, d\mu_0 +t \int_{\R^2}W\ast\mu_1\, d\mu_1.
\end{eqnarray*}

Since minimisers have compact support and finite interaction energy by Theorem~\ref{thm:exist}, this convexity property is enough to guarantee uniqueness and the equivalence of \eqref{EL1}--\eqref{EL2} with minimality.

\begin{proof}[Proof of Proposition~\ref{prop:pars}]
The heuristic idea to prove \eqref{pars} is to apply Plancherel Theorem, as we did formally in \eqref{int:formal}, and write
$$
\int_{\R^2}W\ast\nu\, d\nu =2\pi\int_{\R^2}\widehat W(\xi)|\widehat\nu(\xi)|^2\, d\xi.
$$
Taking into account \eqref{def:Omega} and the fact that
$$
\widehat\nu(0)=\frac1{2\pi}\int_{\R^2}\,d\nu=0,
$$
the right-hand side reduces to 
$$
2\pi\int_{\R^2}\widehat W(\xi)|\widehat\nu(\xi)|^2\, d\xi = 2\pi\int_{\R^2}\frac{\Om(\xi)}{|\xi|^2}|\widehat\nu(\xi)|^2\, d\xi.
$$
However, this argument is only formal, since we do not have enough regularity to apply Plancherel Theorem.
Our strategy is to prove \eqref{pars} by approximation. This is a rather delicate argument, since both sides of \eqref{pars} may a priori be not finite
and neither $W$ nor $\nu$ have a sign.

For $\e>0$ let $\varphi_\e$ be a radial mollifier supported on $B_\e(0)$. Let $\nu_\e:=\nu\ast\varphi_\e\in C^\infty_c(\R^2)\subset\S$.
Thus, $\widehat{\nu_\e}$ belongs to $\S$ and, in particular, to $L^\infty(\R^2)$. We note that $W\ast\nu_\e\in C^\infty(\R^2)$ and, since $\widehat{\nu_\e}$ is smooth
and $\widehat{\nu_\e}(0)=0$, we have that
$$
\widehat{W\ast\nu_\e}=2\pi\widehat W\widehat{\nu_\e}\in L^1(\R^2).
$$
Using these properties one can show that Plancherel Theorem holds for $W\ast\nu_\e$ and $\nu_\e$, that is,
\begin{equation}\label{pars-e}
\int_{\R^2}(W\ast\nu_\e)(x)\nu_\e(x)\, dx =2\pi\int_{\R^2}\widehat W(\xi)|\widehat{\nu_\e}(\xi)|^2\, d\xi
= 2\pi\int_{\R^2}\frac{\Om(\xi)}{|\xi|^2}|\widehat{\nu_\e}(\xi)|^2\, d\xi
\end{equation}
for every $\e>0$.

We now want to pass to the limit in \eqref{pars-e}, as $\e\to0^+$. For every $\xi\in\R^2$ we have
$$
\widehat{\nu_\e}(\xi)=2\pi\widehat\nu(\xi)\widehat\varphi(\e\xi) \ \to \ 2\pi\widehat\nu(\xi)\widehat\varphi(0)=\widehat\nu(\xi),
$$
as $\e\to0^+$. Therefore, $\Om(\xi)|\widehat{\nu_\e}(\xi)|^2/|\xi|^2$ converges to $\Om(\xi)|\widehat\nu(\xi)|^2/|\xi|^2$ for a.e.\ $\xi\in\R^2$, as $\e\to0^+$.
Moreover, $|\widehat\varphi(\e\cdot)|\leq C\|\varphi\|_{L^1}$. Either by dominated convergence or by Fatou's lemma we can thus pass to the limit
in the last integral in \eqref{pars-e}.

To pass to the limit in the left-hand side of \eqref{pars-e}, we observe that
$$
\int_{\R^2}(W\ast\nu_\e)(x)\nu_\e(x)\, dx =\int_{\R^2}(W\ast\varphi_\e\ast\varphi_\e)\ast\nu\, d\nu,
$$
where we used that $\varphi_\e$ is a radial, hence even, function. Set $\psi_\e:=\varphi_\e\ast\varphi_\e$ and note that $\psi_\e$ has the same properties as $\varphi_\e$ (it is radial, compactly supported, non-negative, and with unit integral). Since $W$ is continuous as a function with values in $\R\cup\{+\infty\}$, we have that
$(W\ast\psi_\e)(x)\to W(x)$, as $\e\to0^+$, for every $x\in\R^2$. Moreover, by \eqref{bddW}
\begin{equation}\label{bddW2}
(W\ast\psi_\e)(x) \leq (W_0\ast\psi_\e)(x)+C_2 \quad \text{ for every } x\in\R^2.
\end{equation}
Since $W_0$ is superharmonic and $\psi_\e$ is radial, we have
\begin{equation}\label{bddW3}
(W_0\ast\psi_\e)(x) \leq W_0(x) \leq W(x)- C_1 \quad \text{ for every } x\in\R^2,
\end{equation}
where the last inequality follows again from \eqref{bddW}.

Let $M>0$ be such that $\supp\nu\subset B_M(0)$ and let $c_M$ be the minimum of $W$ on $B_{4M}(0)$.
Combining \eqref{bddW2} and \eqref{bddW3}, we deduce that
\begin{equation}\label{bddW4}
0\leq (W\ast\psi_\e)(x) -c_M\leq W(x)+C_2- C_1 -c_M \quad \text{ for every } x\in B_{2M}(0)
\end{equation}
and for every $\e>0$ small enough.

We now write
\begin{eqnarray*}
\int_{\R^2}(W\ast\psi_\e)\ast\nu\, d\nu & = &
\iint_{\R^2\times\R^2}(W\ast\psi_\e)(x-y)\,d\mu_0(x)d\mu_0(y) 
\\
& & + \iint_{\R^2\times\R^2}(W\ast\psi_\e)(x-y)\,d\mu_1(x)d\mu_1(y) 
\\
& &
-2 \iint_{\R^2\times\R^2}(W\ast\psi_\e)(x-y)\, d\mu_0(x)d\mu_1(y).
\end{eqnarray*}
Since $\mu_0$ and $\mu_1$ have finite interaction energy, we can pass to the limit in the first two integrals on the right-hand side
by \eqref{bddW4} and dominated convergence. As for the last integral, it goes to the limit either by dominated convergence or by Fatou's lemma.
This completes the proof of \eqref{pars}.

To conclude, assume that 
\begin{equation}\label{pars-eq}
0=\int_{\R^2}W\ast\nu\, d\nu =2\pi\int_{\R^2}\frac{\Om(\xi)}{|\xi|^2}|\widehat\nu(\xi)|^2\, d\xi.
\end{equation}
Since $\Om$ cannot be identically zero (otherwise, $W$ would be constant) and is continuous outside~$0$, there exist $\delta>0$,
$\eta>0$, and $\xi_0\neq0$ such that $\Om(\xi)>\eta$ for all $\xi\in B_\delta(\xi_0)$. Thus, \eqref{pars-eq} implies that $\widehat\nu=0$ on $B_\delta(\xi_0)$. 
On the other hand, $\nu$ is a distribution with compact support, so by Paley-Wiener Theorem $\widehat\nu$ is the restriction to $\R^2$ of an entire function.
Therefore, $\widehat\nu$ has to be identically zero on $\R^2$, that is, $\nu=0$.
\end{proof}

\end{section}

\begin{section}{characterisation of minimisers}\label{sec:ch}
Throughout this section we consider the confinement $V(x)=|x|^2$ and we focus on the characterisation of the minimiser of $I$ for this specific choice of the confinement.

\subsection{The Coulomb case}
In the purely Coulomb case with confinement $V(x)=|x|^2$ the energy
\begin{equation}\label{I0}
I_0(\mu)=-\iint_{\R^2\times\R^2}\log|x-y|\,d\mu(y)d\mu(x)+\int_{\R^2}|x|^2\,d\mu(x), \quad \mu\in\P(\R^2),
\end{equation}
is invariant by rotations. Therefore, by uniqueness the minimiser $\mu_0$ has to be invariant by rotations, too. 
If we formally take the Laplacian of both sides of \eqref{EL1} (with $W_0$ in place of $W$), we obtain
$$
0=\Delta W_0\ast\mu_0+2=-2\pi\mu_0+2 \quad \text{ on } \supp\mu_0,
$$
where the last equality follows from the relation $\Delta W_0=-2\pi\delta_0$. Hence, $\mu_0=1/\pi$ on its support.
Since $\mu_0$ is a probability measure, one can conjecture that $\mu_0$ has to be the normalised characteristic function of $B_1(0)$.
This is indeed the case, as shown in the next theorem.

\begin{theorem}[\cite{G, M, Gk}]\label{GM}
The unique minimiser of the energy $I_0$ in \eqref{I0} is the so-called {\em circle law}, that is, the measure
$$
\mu_0=\frac1\pi\chi_{B_1(0)}.
$$
\end{theorem}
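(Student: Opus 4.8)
The plan is to verify directly that the candidate measure $\mu_0=\frac1\pi\chi_{B_1(0)}$ satisfies the Euler--Lagrange conditions \eqref{EL1}--\eqref{EL2}, which by the convexity discussion (Proposition~\ref{prop:pars} with $\Om\equiv1\geq0$, so that $I_0$ is strictly convex on measures of compact support and finite interaction energy) is equivalent to $\mu_0$ being the unique minimiser. So the whole proof reduces to an explicit computation of the logarithmic potential $P_0(x):=(W_0\ast\mu_0)(x)=-\frac1\pi\int_{B_1(0)}\log|x-y|\,dy$ and checking $P_0(x)+\frac12|x|^2=c$ on $B_1(0)$ and $\geq c$ outside.

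First I would compute $P_0$ using the classical Newtonian potential of a uniform ball in the plane: since $\Delta(\log|\cdot|)=2\pi\delta_0$, the function $-\log|\cdot|$ is (up to the factor $2\pi$) the fundamental solution, and the potential of $\frac1\pi\chi_{B_1}$ is radial and satisfies $\Delta P_0=-2\pi\mu_0=-2$ on $B_1(0)$ and $\Delta P_0=0$ outside. Solving the radial ODE $r^{-1}(rP_0')'=-2$ for $r<1$ gives $P_0(r)=-\frac{r^2}{2}+A$, and for $r>1$ harmonicity plus the total mass $1$ forces $P_0(r)=-\log r+B$ (the far-field behaviour of a unit mass). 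Matching $P_0$ and $P_0'$ at $r=1$ determines the constants: continuity of the derivative gives $P_0'(1^-)=-1=P_0'(1^+)=-1$ automatically, and continuity of $P_0$ gives $A-\frac12=B$, so one may take $B=0$, $A=\frac12$, i.e.
\[
P_0(x)=\begin{cases}\dfrac12-\dfrac{|x|^2}{2}, & |x|\leq1,\\[2mm] -\log|x|, & |x|>1.\end{cases}
\]
Then $P_0(x)+\frac12|x|^2$ equals $\frac12$ for $|x|\leq1$, so \eqref{EL1} holds with $c=\frac12$; and for $|x|>1$ it equals $-\log|x|+\frac12|x|^2$, whose derivative in $r$ is $-\frac1r+r>0$, so it is increasing and exceeds its value $\frac12$ at $r=1$, giving \eqref{EL2} with the same $c$ (and everywhere, not just q.e.).

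The only genuinely delicate point is justifying the formula for $P_0$ rigorously — i.e. that the distributional computation $\Delta P_0=-2\pi\mu_0$ together with the decay $P_0(x)+\log|x|\to 0$ pins down $P_0$ exactly, rather than up to an unknown harmonic function. This is standard potential theory (the difference of two such potentials is harmonic on $\R^2$ and bounded, hence constant, and the constant is killed by the common asymptotics), but it should be stated carefully; alternatively one can simply evaluate the integral $-\frac1\pi\int_{B_1}\log|x-y|\,dy$ directly in polar coordinates, which is elementary though slightly tedious. Since strict convexity already gives uniqueness, no compactness or further qualitative argument is needed: once the two Euler--Lagrange conditions are checked for this explicit $\mu_0$, the proof is complete.
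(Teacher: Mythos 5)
Your overall framework — verify the Euler--Lagrange conditions \eqref{EL1}--\eqref{EL2} for the candidate $\mu_0$ and invoke strict convexity (Proposition~\ref{prop:pars} with $\Om\equiv1$) to upgrade this to uniqueness of the minimiser — is exactly the paper's framework, and your final formula for $P_0$ and the verification of both conditions with $c=\tfrac12$ are correct. Where you genuinely diverge is in the computation of $P_0$. The paper evaluates the integral $-\frac1\pi\int_{B_1(0)}\log|x-y|\,dy$ directly in polar coordinates, using the Gauss averaging principle \eqref{Gauss} (which is itself just the mean value property of harmonic functions, applied cleverly both for $|z|>r$ and, after a reflection trick, for $|z|<r$) to collapse the angular integral to a one-variable one. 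You instead solve the radial Poisson problem $\Delta P_0=-2$ inside, $\Delta P_0=0$ outside, and fix the two free constants by continuity of $P_0$ and $P_0'$ at $r=1$ together with the far-field decay $P_0(x)+\log|x|\to0$. Both routes are standard and reach the same answer; the PDE route is perhaps faster to carry out but, as you honestly flag, it requires a Liouville-type argument to guarantee that the PDE plus matching plus decay actually pin down $P_0$ uniquely (the difference of two candidate potentials would be harmonic on all of $\R^2$, bounded, hence constant, and the constant is killed by the common asymptotics). The Gauss averaging route used in the paper sidesteps this subtlety entirely because it is a direct evaluation of the integral, and it also gives the continuity of $P_0$ for free. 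Your remark that one could instead evaluate the integral directly in polar coordinates is essentially a description of the paper's method, so you were clearly aware of both options.
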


The proof of this result is based on the Gauss averaging principle: for any $r>0$
\begin{equation}\label{Gauss}
-\frac1{2\pi}\int_{-\pi}^\pi\log |z-re^{i\theta}|\, d\theta =
\begin{cases}
-\log r & \text{ if } |z|<r,
\\
-\log |z| & \text{ if } |z|\geq r,
\end{cases}
\end{equation}
where $z$ is a complex variable. A notable consequence of this principle is the well-known fact that 
the Coulomb potential
due to a homogeneous spherical body is the same, outside the body, as if all the mass were at its center.

Proving \eqref{Gauss} is straightforward: if $|z|>r$, the map $w\mapsto -\log|z-w|$ is harmonic for $|w|\leq r$, so \eqref{Gauss} follows by the mean value property. If $|z|<r$, one can write
$$
-\log |z-re^{i\theta}| = - \log |ze^{-i\theta} -r| = - \log |\bar ze^{i\theta} -r|. 
$$
The map $w\mapsto -\log|w -r|$ is harmonic for $|w|\leq |z|$, so one can conclude again by applying the mean value property. 
Finally, for $|z|=r$ one can argue by approximation with radii $\rho\to r^-$ and dominated convergence.

\begin{proof}[Proof of Theorem~\ref{GM}]
By applying the Gauss averaging principle we can explicitely compute the Coulomb potential of $\mu_0$: indeed, using polar coordinates we have
$$
(W_0\ast\mu_0)(x) =  -\frac1\pi \int_{B_1(0)}\log|x-y|\, dy =
-\frac1\pi \int_0^1\int_{-\pi}^\pi \log|x-r e^{i\theta}|\, d\theta\, r\,dr.
$$
By \eqref{Gauss} we deduce that
$$
(W_0\ast\mu_0)(x) =  
\begin{cases}
\frac12-\frac12|x|^2 & \text{ for } |x|\leq 1,
\\
-\log|x| & \text{ for } |x|>1.
\end{cases}
$$
Using this formula, one can immediately check that $(W_0\ast\mu_0)(x)+\frac12 |x|^2=\frac12$ for $x\in B_1(0)$ and
$(W_0\ast\mu_0)(x)+\frac12 |x|^2\geq \frac12$ for $x\not\in B_1(0)$, that is, $\mu_0$ satisfies the Euler-Lagrange equations.
By the results of the previous section this is enough to conclude that $\mu_0$ is the unique minimiser of~$I_0$.
\end{proof}

\subsection{The anisotropic case}
In this section we discuss the characterisation of the minimiser for the energy
\begin{equation}\label{Ikappa}
I(\mu)=\iint_{\R^2\times\R^2}\big(-\log|x-y|+\kappa(x-y)\big)\,d\mu(y)d\mu(x)+\int_{\R^2}|x|^2\,d\mu(x),
\end{equation}
where $\kappa$ is even, $0$-homogeneous, and of regularity $H^s(\mathbb S^1)$ with $s>3/2$. We start by considering the case where the Fourier transform of the interaction kernel (computed in \eqref{def:Omega}) is strictly positive outside~$0$.

\begin{theorem}\label{thm:CS}
Assume $\Om>0$ on $\mathbb S^1$, where $\Om$ is the function introduced in \eqref{def:Omega}.
Then the unique minimiser of the energy $I$ in \eqref{Ikappa} is given by the normalised characteristic function of the domain enclosed by an ellipse centered at the origin, whose
semi-axes $a_1,a_2$ satisfy the relation $a_1^2+a_2^2=2$.
\end{theorem}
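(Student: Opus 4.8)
The strategy is to exploit the equivalence, established in Section~\ref{sec:uniq}, between minimality and the Euler--Lagrange system \eqref{EL1}--\eqref{EL2}: since $\Om>0$ on $\mathbb S^1$ implies in particular $\Om\geq0$, the energy $I$ is strictly convex on the relevant class, so it suffices to exhibit \emph{one} measure of the conjectured form that satisfies \eqref{EL1}--\eqref{EL2} with $V(x)=|x|^2$, and uniqueness is then automatic. The ansatz is $\mu=\frac1{|E|}\chi_E$, where $E$ is the domain bounded by the ellipse $\{x_1^2/a_1^2+x_2^2/a_2^2\leq1\}$, with $a_1,a_2>0$ to be determined; one degree of freedom will be fixed by the normalisation/Euler--Lagrange constant and the constraint $a_1^2+a_2^2=2$ should emerge from the analysis. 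The heart of the matter is to compute $W\ast\mu$ \emph{inside} $E$ and show it equals $c-\frac12|x|^2$ there for a suitable constant $c$, and then to verify the inequality \eqref{EL2} outside.

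The key step is the explicit evaluation of $W\ast\mu$ on $E$ via Fourier inversion, following the approach of \cite{MMRSV22p}. Writing $\widehat W=c_0\delta_0+\Om(\xi)/|\xi|^2$ as in \eqref{def:Omega}, one has formally $\widehat{W\ast\mu}=2\pi\widehat W\widehat\mu$, so that, away from the Dirac mass,
\begin{equation*}
(W\ast\mu)(x)=\text{const}+2\pi\int_{\R^2}\frac{\Om(\xi)}{|\xi|^2}\,\widehat\mu(\xi)\,e^{i\xi\cdot x}\,d\xi.
\end{equation*}
The point is that for the normalised characteristic function of an ellipse, $\widehat\mu(\xi)$ has a closed form in terms of Bessel functions, and crucially the combination $\Om(\xi)\widehat\mu(\xi)/|\xi|^2$, when integrated against $e^{i\xi\cdot x}$ over $x\in E$, produces a quadratic polynomial in $x$. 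The cleanest way to organise this is to expand $\Om$ in its Fourier series on $\mathbb S^1$ (equivalently, in the harmonic polynomials $\phi_{2n},\psi_{2n}$ as in \eqref{Fourier-kappa}--\eqref{hatk}) and to use that the Newtonian-type potentials $\int_E \phi_{2n}(x-y)/|x-y|^{2n}\,\log$-type kernels of an ellipse are themselves polynomials of controlled degree inside $E$ — this is the 2d analogue of the classical fact (used already in the Coulomb case, Theorem~\ref{GM}) that the logarithmic potential of a uniform ellipse is quadratic inside. One then reads off the coefficients: matching the $|x|^2$-coefficient to $-\tfrac12$ and matching the cross term $x_1x_2$ to zero (which holds by the symmetry of $\Om$, hence no rotation is needed) yields two equations for $a_1,a_2$; combined with $|E|=\pi a_1a_2$ and the overall scaling, these force $a_1^2+a_2^2=2$.

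The main obstacle is making the Fourier-inversion computation rigorous: as emphasised in the proof of Proposition~\ref{prop:pars}, neither $\widehat W$ nor $\widehat\mu$ has enough integrability for a naive application of Plancherel or the inversion formula, and $\widehat W$ has a non-integrable singularity at the origin encoded in the distributional formula \eqref{hatlog}. I would handle this exactly as in Proposition~\ref{prop:pars}: mollify $\mu$ by a radial kernel $\varphi_\e$, perform all manipulations for the smooth compactly supported $\mu_\e:=\mu\ast\varphi_\e$ (for which $\widehat{W\ast\mu_\e}=2\pi\widehat W\widehat{\mu_\e}\in L^1$), obtain the polynomial identity for $W\ast\mu_\e$ on a fixed neighbourhood of $E$ with $\e$-dependent coefficients, and then pass to the limit $\e\to0^+$ using the domination estimates \eqref{bddW2}--\eqref{bddW4} together with the convergence $\widehat{\mu_\e}\to\widehat\mu$ locally; the $\delta_0$-part of $\widehat W$ contributes only an (irrelevant) additive constant because $\int d\mu=1$ is fixed. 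Once \eqref{EL1} is verified on $E$, the inequality \eqref{EL2} outside $E$ follows from a maximum-principle / superharmonicity argument: the function $x\mapsto (W\ast\mu)(x)+\tfrac12|x|^2-c$ vanishes on $E$, is continuous, and its behaviour at infinity ($\sim\tfrac12|x|^2-\log|x|\to+\infty$) together with the sign of its Laplacian outside $\supp\mu$ (where $\Delta(W\ast\mu)=\Delta W_0\ast\mu+\Delta\kappa\ast\mu$ can be controlled, the dominant term being $+2$ from $\tfrac12|x|^2$) forces nonnegativity; alternatively one invokes the general potential-theoretic fact that \eqref{EL1} on $\supp\mu$ plus the superharmonicity structure propagates \eqref{EL2} to all of $\R^2$.
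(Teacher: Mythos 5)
Your overall skeleton is the right one and matches the paper's: by strict convexity it suffices to exhibit one measure of the conjectured form satisfying \eqref{EL1}--\eqref{EL2}, and the key is a Fourier-inversion computation showing $W\ast\chi$ is quadratic inside the ellipse. You also correctly identify the technical obstruction (non-integrability of $\widehat W$) and propose a plausible workaround (mollification, as in Proposition~\ref{prop:pars}). However, two of the three substantive steps of the argument contain genuine gaps.

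First, the solvability of the Euler--Lagrange system for the ellipse parameters is where the hypothesis $\Om>0$ (as opposed to merely $\Om\geq0$) is actually used, and you do not address it at all. You claim that ``matching the cross term $x_1x_2$ to zero... holds by the symmetry of $\Om$, hence no rotation is needed.'' This is false for a general even $0$-homogeneous anisotropy: $\Om$ need not be symmetric with respect to the coordinate axes, and the cross-term equation is a genuine third constraint. The paper therefore works with a full positive symmetric matrix $M=RD(a)^2R^T$ (three degrees of freedom, incorporating a rotation), rewrites the system as $\nabla_{\!M}f(M)=0$ for an explicit potential $f$ on $\mpos$, and shows $f$ attains its minimum in the \emph{interior} of $\mpos$. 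That last step---excluding degenerate limits $\beta_0\in\{0,2\}$---is precisely where $\Om\geq C_0>0$ enters, and it is what distinguishes the case $\Om>0$ (ellipse) from $\Om\geq0$ (possible collapse to a segment, Corollary~\ref{cor:deg}). Your proposal gives only two scalar equations and no existence argument, so both the rotation and the role of strict positivity are lost.

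Second, your proposed derivation of \eqref{EL2} from \eqref{EL1} via a maximum-principle/superharmonicity argument does not go through for anisotropic kernels. Outside $E$ one has $\Delta(W\ast\chi+\tfrac12|x|^2)=2+(\Delta\kappa)\ast\chi$, and since $\kappa$ is $0$-homogeneous, $\Delta\kappa=\kappa''(\theta)/r^2$ has no sign: the potential is neither sub- nor superharmonic outside the support, so there is no maximum principle to invoke. (The Coulomb intuition $\Delta W_0\ast\mu=0$ off the support is exactly what fails here.) There is also no ``general potential-theoretic fact'' that \eqref{EL1} plus superharmonicity forces \eqref{EL2} in this setting. The paper instead obtains \eqref{EL2} by a direct radial monotonicity computation: starting from the inversion formula for $\nabla(W\ast\chi)$, the explicit evaluation of the improper integral $\int_0^\infty J_1(r)r^{-1}\sin(r\alpha)\,dr$ (which branches at $\alpha=1$) shows that $\nabla(W\ast\chi)(x)\cdot x+|x|^2$ equals a manifestly nonnegative quantity for $x\notin E$, once the quadratic part is cancelled using \eqref{EL1-p}. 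This step is an explicit Bessel-function computation and cannot be replaced by a soft argument.

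One smaller remark: the paper avoids the distributional difficulties you flag not by mollification but by applying the inversion formula to $\nabla(W\ast\chi)$ rather than to $W\ast\chi$ itself. The extra factor $i\xi$ kills both the $\delta_0$ and the $1/|\xi|^2$ singularity of $\widehat W$, and the Bessel decay \eqref{J1-1}--\eqref{J1-2} then gives $L^1$ integrability directly, with no approximation needed. Your mollification route could presumably be made to work, but the gradient trick is cleaner and is what makes the explicit formulas \eqref{inv-form-c}--\eqref{inv-form} available, which in turn drive both Steps~2 and~3.
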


Theorem~\ref{thm:CS} was originally proved by Carrillo\and Shu in \cite{CS22}. Here we present an alternative proof inspired by \cite{MMRSV22p}.

\begin{proof}[Proof of Theorem~\ref{thm:CS}]
We write a general domain enclosed by an ellipse as 
%
$E=RE_0$, where $R\in SO(2)$ and
$$
E_0=\Big\{ x\in\R^2: \ \frac{x_1^2}{a_1^2}+\frac{x_2^2}{a_2^2}\leq 1\Big\}
$$
and we set $\chi:=\chi_E/|E|$. The theorem is proved if we show that there exist $a_1,a_2>0$ and $R\in SO(2)$
such that
\begin{equation}\label{EL1-p}
(W\ast\chi)(x)+\frac12 |x|^2=c \quad \text{for every } x\in E,
\end{equation}
and
\begin{equation}\label{EL2-p}
(W\ast\chi)(x)+\frac12 |x|^2\geq c \quad \text{for every } x\in \R^2\setminus E
\end{equation}
for some constant $c\in\R$.

Note that $W\ast\chi$ is a $C^1$ function in $\R^2$. To compute its expression we would like to make use of the inversion formula, that is, write $W\ast\chi$ as an integral of its Fourier transform. However, $\widehat W$ is not a function, it is only a tempered distribution. To circumvent this difficulty we apply this strategy to the gradient of $W\ast\chi$.
In fact, the Fourier transform of $\nabla(W\ast\chi)$ is given by
$$
i\xi\widehat{W\ast\chi}(\xi)=2\pi i\xi\widehat W(\xi)\widehat\chi(\xi),
$$
so the presence of the factor $\xi$ annihilates the singular part of $\widehat W$.
Note that $\nabla(W\ast\chi)$ is a continuous function that behaves as $1/|x|$ at infinity, so it is a tempered distribution.\smallskip

\noindent
{\it Step 1: The inversion formula for $\nabla(W\ast\chi)$.} We recall that
$$
\widehat{\chi_{B_1(0)}}(\xi)=\frac{J_1(|\xi|)}{|\xi|},
$$
where $J_1$ denotes the Bessel function of the first kind of order $1$. There is no explicit formula for $J_1$; however, it is well known that
$J_1$ has the following behaviour:
\begin{equation}\label{J1-1}
J_1(|\xi|)\simeq \frac12|\xi| \quad \text{ as } \xi\to0,
\end{equation}
\begin{equation}\label{J1-2}
|J_1(|\xi|)|\leq \frac{C}{|\xi|^{1/2}} \quad \text{ as } |\xi|\to\infty,
\end{equation}
see, e.g., \cite[Section~5.16]{Lebedev}.

For $a=(a_1,a_2)\in\R^2$ we denote the diagonal matrix ${\rm diag}(a_1,a_2)$ by $D(a)$.
By writing $x\in E$ as $x=RD(a)y$ with $y\in B_1(0)$, we obtain
$$
\widehat\chi(\xi)=\frac1\pi\widehat{\chi_{B_1(0)}}\big(D(a)R^T\xi\big)=\frac1\pi \frac{J_1(|D(a)R^T\xi|)}{|D(a)R^T\xi|}.
$$
By the bounds \eqref{J1-1}--\eqref{J1-2} we deduce that the function $2\pi i\xi\widehat W\widehat\chi$ belongs to $L^1(\R^2)$. Therefore, since 
$\nabla(W\ast\chi)$ is a continuous function, the inversion formula holds, that is,
\begin{equation*}
\nabla(W\ast\chi)(x)=\int_{\R^2} i\xi\frac{\Om(\xi)}{|\xi|^2}\widehat\chi(\xi)e^{ix\cdot\xi}\, d\xi
= -\int_{\R^2}\xi\frac{\Om(\xi)}{|\xi|^2}\widehat\chi(\xi)\sin(x\cdot\xi)\, d\xi
\end{equation*}
for every $x\in\R^2$, where we used that $\Om$ and $\widehat\chi$ are even functions.
Passing to polar coordinates we obtain
\begin{eqnarray*}
\nabla(W\ast\chi)(x) & = & -\int_{\mathbb S^1}\int_0^\infty y\Om(y)\widehat\chi(\rho y)\sin(\rho x\cdot y)\, d\rho\, d\HH^1(y)
\\
& = & -\frac1\pi \int_{\mathbb S^1}\int_0^\infty y \Om(y) \frac{J_1(\rho |D(a)R^Ty|)}{\rho |D(a)R^Ty|} \sin(\rho x\cdot y)\, d\rho \, d\HH^1(y).
\end{eqnarray*}
Setting $r:=\rho |D(a)R^Ty|$, we deduce that
\begin{equation}\label{inv-form-c}
\nabla(W\ast\chi)(x)=-\frac1\pi \int_{\mathbb S^1}\frac{y \Om(y)}{|D(a)R^Ty|} \int_0^\infty  \frac{J_1(r)}{r} \sin(r \alpha(x,y))\, dr\, d\HH^1(y)
\end{equation}
for every $x\in\R^2$, where we set
\begin{equation}\label{def:alfa}
\alpha(x,y):=\frac{x\cdot y}{|D(a)R^Ty|}.
\end{equation}
Using formula (5), page 99, in \cite{Bateman}, one can compute the improper integral
\begin{equation}\label{improp}
\int_0^\infty  \frac{J_1(r)}{r} \sin(r \alpha)\, dr=\begin{cases}
\alpha & \text{ if } 0\leq\alpha\leq 1,
\\
\dfrac1{\alpha+\sqrt{\alpha^2-1}} & \text{ if } \alpha>1.
\end{cases}
\end{equation}
Moreover, we have that, if $x\in E$, then
$$
|x\cdot y|=|D(a)^{-1}R^Tx\cdot D(a)R^Ty|\leq |D(a)R^Ty|,
$$
that is, $|\alpha(x,y)|\leq 1$ for every $x\in E$. We conclude that
\begin{equation}\label{inv-form}
\nabla(W\ast\chi)(x)=-\frac1\pi \int_{\mathbb S^1} \frac{\Om(y)}{|D(a)R^Ty|} \alpha(x,y)y \, d\HH^1(y)
= -\frac1\pi \int_{\mathbb S^1} \frac{\Om(y)}{|D(a)R^Ty|^2}(x\cdot y)y \, d\HH^1(y)
\end{equation}
for every $x\in E$. Formula \eqref{inv-form} shows that $\nabla(W\ast\chi)$ is a homogeneous polynomial of degree~$1$ inside $E$. In other words, up to an additive constant, 
$W\ast\chi$ is a homogeneous polynomial of degree~$2$ inside~$E$.\smallskip

\noindent
{\it Step 2: Solving the first Euler-Lagrange equation.} Solving \eqref{EL1-p} corresponds to finding $a_1,a_2>0$ and $R\in SO(2)$
such that
\begin{equation*}
\nabla(W\ast\chi)(x) +x = 0 \quad \text{for every } x\in E.
\end{equation*}
By formula \eqref{inv-form} this translates into the following system of three equations:
\begin{equation}\label{system}
\frac1\pi \int_{\mathbb S^1} \frac{\Om(y)}{|D(a)R^Ty|^2} y_j y_k\, d\HH^1(y)=\delta_{jk} \quad \text{ for every } j,k=1,2,
\end{equation}
where $\delta_{jk}$ is the Kronecker delta. 

Let us denote by $\mpos$ the set of positive definite $2\times2$ symmetric matrices. Note that $M:=RD(a)^2R^T\in\mpos$ and
$|D(a)R^Ty|^2=My\cdot y$.
Solving the system \eqref{system} is therefore equivalent to finding $M\in \mpos$ such that
\begin{equation}\label{system1}
\frac1\pi \int_{\mathbb S^1} \frac{\Om(y)}{My\cdot y} y_j y_k \, d\HH^1(y)=\delta_{jk} \quad \text{ for every } j,k=1,2.
\end{equation}
Let us denote the entries of $M$ by $M_{jk}$ for $j,k=1,2$. 
Note that, if we multiply by $M_{jk}$ the $jk$ equation in \eqref{system} and we sum over $j,k$, we obtain
$$
\tr(M)=M_{11}+M_{22}=\frac1\pi  \int_{\mathbb S^1} \Om(y) \, d\HH^1(y)=2,
$$
where we used \eqref{int-Omega}. Since $\tr(M)=a_1^2+a_2^2$, where $a_1,a_2$ are the semi-axes of the candidate ellipse,
we deduce that, if a minimising ellipse exists, then necessarily its semi-axes satisfy the relation
$$
a_1^2+a_2^2=2.
$$

To solve \eqref{system1} it is convenient to introduce the function 
$$
f(M):=-\frac1\pi \int_{\mathbb S^1}\Om(y)\log(My\cdot y) \, d\HH^1(y) +\tr(M)
$$
defined for every $M\in\mpos$.
It is immediate to see that conditions \eqref{system1} are satisfied if we find $M_0\in\mpos$ such that $\nabla_{\!M} f(M_0)=0$,
where with a slight abuse of notation we denoted by $\nabla_{\!M}$ the operator
$$
\nabla_{\!M}=\Big(\frac{\partial}{\partial M_{11}}, \frac{\partial}{\partial M_{22}}, \frac{\partial}{\partial M_{12}}\Big).
$$

We claim that $f$ has a minimiser in $\mpos$. Since $\mpos$ is an open set, the claim will imply the existence of a critical point of $f$ in this set and thus,
of a solution to \eqref{system1}.

Given $M\in\mpos$, we first look at the behaviour of the function $f$ along the line $tM$ for $t>0$, that is, we consider the function
$$
g(t):=f(tM)=-\frac1\pi \int_{\mathbb S^1}\Om(y)\log(My\cdot y) \, d\HH^1(y) -2\log t+t \tr(M)
$$
for $t>0$, where the last equality follows from \eqref{int-Omega}. It is immediate to see that $g$ is minimised at $t_\ast=2/\tr(M)$.
Therefore, minimising $f$ over $\mpos$  is equivalent to minimising $f$ on the set
$$
\mathcal M:=\big\{M\in\mpos: \ \tr(M)=2\big\}.
$$
By diagonalization any $M\in\mathcal M$ can be written as $M=QD(b)Q^T$ with $Q\in SO(2)$ and $b=(\beta,2-\beta)$, $\beta\in(0,2)$.
Using this representation and a change of variables we have that
$$
f(M)=f(QD(b)Q^T)=- \frac1\pi \int_{\mathbb S^1}\Om(Qy)\log(\beta y_1^2+(2-\beta)y_2^2) \, d\HH^1(y) +2.
$$
Therefore, setting
$$
\gamma(\beta, Q):=- \frac1\pi \int_{\mathbb S^1}\Om(Qy)\log(\beta y_1^2+(2-\beta)y_2^2) \, d\HH^1(y)
$$
for every $\beta\in[0,2]$ and $Q\in SO(2)$, it is enough to show that $\gamma$ has a minimiser in $(0,2)\times SO(2)$ to conclude that $f$ has a minimiser in $\mathcal M$ and thus in $\mpos$.

The function $\gamma$ is finite and continuous on the compact set $[0,2]\times SO(2)$. Therefore, it has a minimiser $(\beta_0,Q_0)$ in this set.
It remains to prove that $\beta_0$ is neither $0$ nor $2$. 

By assumption there esists a constant $C_0>0$ such that
$$
\Om(\xi)\geq C_0 \quad \text{ for every } \xi\in\mathbb S^1.
$$
Using this inequality and \eqref{int-Omega}, we obtain that
\begin{eqnarray*}
\frac{\partial}{\partial\beta}\gamma(\beta, Q_0) & = &
\frac1\pi \int_{\mathbb S^1}\Om(Q_0y)\frac{2y_2^2-1}{\beta y_1^2+(2-\beta)y_2^2} \, d\HH^1(y)
\\
& \leq & \frac4{2-\beta} - \frac1\pi \int_{\mathbb S^1}\frac{C_0}{\beta y_1^2+(2-\beta)y_2^2} \, d\HH^1(y)
\end{eqnarray*}
for every $\beta\in(0,2)$.
Since the right-hand side goes to $-\infty$ as $\beta\to0^+$, we deduce that there exists some $\delta>0$
such that
$$
\frac{\partial}{\partial\beta}\gamma(\beta, Q_0) <0
$$
for every $\beta\in(0,\delta)$. Hence, $\beta_0$ cannot be equal to zero.
One can show in a similar way that $\beta_0$ cannot be $2$ either.
This concludes the proof of the step.\smallskip

{\it Step 3: The first Euler-Lagrange equation implies the second one.}
To complete the proof, we show that, if $\chi=\chi_E/|E|$ satisfies \eqref{EL1-p}, then it satisfies \eqref{EL2-p}, as well.
Assume $\chi$ satisfies \eqref{EL1-p}. By \eqref{EL1-p} it is enough to prove that the potential $(W\ast\chi)(x)+\frac12 |x|^2$ grows in the radial direction, that is,
$$
\nabla(W\ast\chi)(x)\cdot x + |x|^2\geq0 \quad \text{for every } x\in \R^2\setminus E.
$$

Let $x\in \R^2\setminus E$. We can write $x=tx^0$ with $x^0\in E$. By \eqref{EL1-p} we have that
$$
\nabla(W\ast\chi)(x^0)\cdot x^0 + |x^0|^2 = 0,
$$
which can be written by \eqref{inv-form} as
$$
-\frac1\pi \int_{\mathbb S^1} \Om(y)\alpha^2(x^0,y)\, d\HH^1(y) + |x^0|^2 = 0,
$$
where $\alpha$ is defined in \eqref{def:alfa}.
Multiplying this equation by $t^2$ yields
\begin{equation}\label{zero}
-\frac1\pi \int_{\mathbb S^1} \Om(y)\alpha^2(x,y)\, d\HH^1(y) + |x|^2 = 0.
\end{equation}
By the inversion formula \eqref{inv-form-c} and \eqref{improp} we can write
\begin{eqnarray*}
\nabla(W\ast\chi)(x)\cdot x +|x|^2 & = & |x|^2-\frac1\pi \int_{\mathbb S^1}\Om(y)\alpha^2(x,y)\chi_{[-1,1]}(\alpha(x,y))\, d\HH^1(y) 
\\
& &
-\frac1\pi \int_{\mathbb S^1}\Om(y)\frac{|\alpha(x,y)|}{|\alpha(x,y)|+\sqrt{\alpha^2(x,y)-1}}\chi_{\R\setminus[-1,1]}(\alpha(x,y))\, d\HH^1(y).
\end{eqnarray*}
Owing to \eqref{zero}, the expression above reduces to
$$
\nabla(W\ast\chi)(x)\cdot x +|x|^2 
= \frac1\pi \int_{\mathbb S^1}\Om(y)|\alpha(x,y)|\sqrt{\alpha^2(x,y)-1}\,\chi_{\R\setminus[-1,1]}(\alpha(x,y))\, d\HH^1(y) 
\geq 0.
$$
This concludes the proof.
\end{proof}

The next result shows how to combine the previous theorem and an approximation argument to characterise the minimiser
in the `degenerate' case where the Fourier transform of the interaction kernel is only non-negative outside~$0$.

\begin{cor}\label{cor:deg}
Assume $\Om\geq0$ on $\mathbb S^1$, where $\Om$ is the function introduced in \eqref{def:Omega}.
Then the unique minimiser of the energy $I$ in \eqref{Ikappa} is either the normalised characteristic function of the domain enclosed by an ellipse centered at the origin and with semiaxes $a_1,a_2$ satisfying $a_1^2+a_2^2=2$, or is the semicircle law \eqref{semicircle}, up to rotations.
\end{cor}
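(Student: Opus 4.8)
The plan is to obtain Corollary~\ref{cor:deg} from Theorem~\ref{thm:CS} by approximating the anisotropy so that the angular part $\Om$ of the Fourier transform becomes \emph{strictly} positive. For $\e\in(0,1)$ I would consider the kernel $W_\e(x):=-\log|x|+(1-\e)\kappa(x)=W(x)-\e\kappa(x)$, which is again of the form \eqref{kernel} with anisotropy $(1-\e)\kappa\in H^s(\mathbb S^1)$, $s>3/2$. By linearity of the Fourier transform and \eqref{def:Omega}, its angular part is $\Om_\e=1+(1-\e)\widehat\kappa=\e+(1-\e)\Om$, which is $\geq\e>0$ on $\mathbb S^1$ since $\Om\geq0$ by hypothesis. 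Hence Theorem~\ref{thm:CS} applies to the energy $I_\e$ obtained from \eqref{Ikappa} by replacing $\kappa$ with $(1-\e)\kappa$: its unique minimiser is $\mu_\e=\chi_{E_\e}/|E_\e|$ with $E_\e=R_\e D(a(\e))B_1(0)$, $R_\e\in SO(2)$, $a_1(\e),a_2(\e)>0$, and $a_1(\e)^2+a_2(\e)^2=2$. In particular $\supp\mu_\e\subset\overline{B_{\sqrt2}(0)}$ for every $\e$.

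Next I would let $\e\to0^+$. The matrices $T_\e:=R_\e D(a(\e))$ lie in the compact set $\{RD(a):R\in SO(2),\ a_1,a_2\geq0,\ a_1^2+a_2^2=2\}$, so along a subsequence $\e_k\to0$ one has $T_{\e_k}\to T=RD(a)$. For $\varphi\in C_b(\R^2)$, the change of variables $x=T_\e z$, $z\in B_1(0)$, gives
$$
\int_{\R^2}\varphi\,d\mu_{\e_k}=\frac1\pi\int_{B_1(0)}\varphi(T_{\e_k}z)\,dz\ \longrightarrow\ \frac1\pi\int_{B_1(0)}\varphi(Tz)\,dz=:\int_{\R^2}\varphi\,d\mu,
$$
so $\mu_{\e_k}\wto\mu$ narrowly with $\supp\mu\subset\overline{B_{\sqrt2}(0)}$. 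Since $a_1^2+a_2^2=2$, at most one semi-axis can vanish. If $a_1,a_2>0$ then $\mu=\chi_E/|E|$ with $E=T(B_1(0))$ a domain enclosed by an ellipse centered at the origin with semi-axes $a_1,a_2$. If, say, $a_1=0$ (hence $a_2=\sqrt2$), then $Tz=\sqrt2\,z_2\,Re_2$ and integrating out $z_1$ over the disc and substituting $t=\sqrt2\,z_2$ shows $\int_{\R^2}\varphi\,d\mu=\tfrac1\pi\int_{-\sqrt2}^{\sqrt2}\varphi(tRe_2)\sqrt{2-t^2}\,dt$, i.e.\ $\mu$ is the semicircle law \eqref{semicircle}, up to the rotation $R$.

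Finally I would verify that $\mu$ minimises $I$; by the uniqueness result of Section~\ref{sec:uniq} (which only uses $\Om\geq0$) this makes $\mu$ \emph{the} unique minimiser, hence of one of the two forms above. Pick $\nu\in\P(\R^2)$ with compact support and $I(\nu)<+\infty$ (such $\nu$ exists by Theorem~\ref{thm:exist}). Since $\kappa$ is bounded, $I_\e(\nu)=I(\nu)-\e\iint\kappa(x-y)\,d\nu(x)d\nu(y)\to I(\nu)$, while $I_{\e_k}(\mu_{\e_k})\leq I_{\e_k}(\nu)$ by minimality. In $I_{\e_k}(\mu_{\e_k})$ the perturbation term $-\e_k\iint\kappa\,d\mu_{\e_k}d\mu_{\e_k}$ is $O(\e_k)$, the confinement term converges to $\int|x|^2\,d\mu$ because all measures are supported in the fixed ball $\overline{B_{\sqrt2}(0)}$, and the interaction term $\iint W\,d\mu_{\e_k}d\mu_{\e_k}$ is lower semicontinuous along the sequence, since $W$ is lower semicontinuous and bounded below on $\overline{B_{2\sqrt2}(0)}$ and $\mu_{\e_k}\otimes\mu_{\e_k}\wto\mu\otimes\mu$ narrowly. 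Passing to the $\liminf$ gives $I(\mu)\leq\liminf_k I_{\e_k}(\mu_{\e_k})\leq\lim_k I_{\e_k}(\nu)=I(\nu)$, and $\nu$ being arbitrary, $\mu$ is a minimiser.

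The step I expect to be the main obstacle is the last limit passage: one must combine the narrow convergence $\mu_{\e_k}\wto\mu$ with the lower semicontinuity of the interaction energy and the uniform compactness of the supports to absorb the $\e$-perturbation, and — more a matter of careful bookkeeping than of real difficulty — one must correctly recognise the degenerate limit of increasingly thin ellipse laws as the semicircle law \eqref{semicircle}. The remaining ingredients (applicability of Theorem~\ref{thm:CS} for each $\e$, and precompactness of the parameters $R_\e,a(\e)$) are immediate.
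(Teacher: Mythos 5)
Your proposal is correct and follows the same overall strategy as the paper's proof: perturb the kernel so that the angular part of its Fourier transform becomes strictly positive, invoke Theorem~\ref{thm:CS}, extract a narrow limit of the ellipse laws, and then verify minimality via lower semicontinuity combined with the uniqueness machinery of Section~\ref{sec:uniq}. The one genuine difference is the choice of approximating family. The paper sets $W_\e=-(1+\e)\log|x|+\kappa(x)$, strengthening the Coulomb part, whereas you set $W_\e=-\log|x|+(1-\e)\kappa(x)$, damping the anisotropy. Your choice is arguably cleaner: it keeps $W_\e$ exactly of the form \eqref{kernel} with an even, $0$-homogeneous anisotropy, and it preserves the normalisation \eqref{int-Omega} since $\frac1{2\pi}\int_{\mathbb S^1}\Om_\e\,d\HH^1 = \frac1{2\pi}\int_{\mathbb S^1}\big(\e+(1-\e)\Om\big)\,d\HH^1=1$, so the constraint $a_1(\e)^2+a_2(\e)^2=2$ holds identically for every $\e$. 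With the paper's $W_\e$ one must either rescale space to reduce to a kernel of the form \eqref{kernel} (which shifts the semi-axis constraint to $2(1+\e)\to2$) or read the statement of Theorem~\ref{thm:CS} a bit loosely; neither affects the limit, but your variant avoids the issue. Your explicit parameter-compactness and change-of-variables identification of the degenerate limit (the thin-ellipse law collapsing to the rotated semicircle law) is also a slightly more concrete version of the paper's ``up to subsequences the semi-axes converge'' step. The closing minimality argument is sound, though it is worth making explicit (as the paper does) that passing from ``$\mu$ minimises among compactly supported measures of finite interaction energy'' to ``$\mu$ is the global minimiser'' uses that, by Theorem~\ref{thm:exist}, the global minimiser necessarily lies in that class, combined with uniqueness from Proposition~\ref{prop:pars}.
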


\begin{proof}
For $\e>0$ we consider the kernels
$$
W_\e(x)=-(1+\e)\log|x|+\kappa(x)
$$
and we denote by $I_\e$ the corresponding energies with confinement $|x|^2$. Arguing as in \eqref{hatk}, it is immediate to see that
$$
\widehat{W_\e}(\xi)=c_\e\delta_0+\frac{\Om(\xi)+\e}{|\xi|^2}
$$
with $\Om+\e>0$ on $\mathbb S^1$. By Theorem~\ref{thm:CS} for every $\e>0$ the unique minimiser $\mu_\e$ of $I_\e$ is the normalised characteristic function of the domain enclosed by an ellipse
of semiaxes $a_{1,\e}$ and $a_{2,\e}$ with $(a_{1,\e})^2+(a_{2,\e})^2=2$. The sequence $(\mu_\e)_\e$ is tight, since the support of $\mu_\e$ is contained in the closed ball of center $0$ and radius $\sqrt2$ for every $\e>0$. Therefore, up to subsequences, $(\mu_\e)_{\e}$ converges narrowly to a measure $\mu_0\in\P(\R^2)$.
We claim that $\mu_0$ is the minimiser of $I$. Indeed, since the supports of $\mu_\e$ and $\mu_0$ are uniformly bounded, on these sets $W_0$ is bounded from below by some constant $-c_0$, with $c_0>0$. Therefore,
$$
I_\e(\mu_\e)\geq I(\mu_\e)- c_0\e
$$
and by lower semicontinuity
$$
\liminf_{\e\to0^+} I_\e(\mu_\e)\geq \liminf_{\e\to0^+} I(\mu_\e) \geq I(\mu_0).
$$
On the other hand, if $\mu$ is any measure in $\P(\R^2)$ with compact support and such that 
\begin{equation}\label{limsup-asp}
\iint_{\R^2\times\R^2}\big(-\log|x-y|+\kappa(x-y)\big)\,d\mu(y)d\mu(x)<+\infty,
\end{equation}
then by minimality
$$
\limsup_{\e\to0^+} I_\e(\mu_\e)\leq \lim_{\e\to0^+} I_\e(\mu) = I(\mu).
$$
Therefore, $\mu_0$ minimises $I$ over all measures with compact support and satisfying \eqref{limsup-asp}. 
By Theorem~\ref{thm:exist} and Proposition~\ref{prop:pars} we conclude that $\mu_0$ is the minimiser of $I$ on the whole $\mathcal P(\R^2)$.

Up to subsequences, we can assume that the semi-axes converge, that is, $a_{1,\e} \to a_1$ and $a_{2,\e}\to a_2$, as $\e\to0^+$, with $a_1,a_2\geq0$ and $a_1^2+a_2^2=2$.
Therefore, we have two cases: either both $a_1$ and $a_2$ are strictly positive, or one of them is $0$ and the other one is $\sqrt2$.
In the first case $\mu_0$ is the normalised characteristic function of the domain enclosed by an ellipse with semiaxes $a_1,a_2$ satisfying $a_1^2+a_2^2=2$.
In the second case $\mu_0$ is the semicircle law \eqref{semicircle} on some line passing through the origin.
\end{proof}

\begin{remark}
The proofs of Theorem~\ref{thm:CS} and Corollary~\ref{cor:deg} extend to $\R^3$, see \cite{MMRSV22p}, where the kernel $W$ is given by the following anisotropic variant of the three-dimensional Coulomb kernel:
$$
W(x)=\frac{\Psi(x)}{|x|} \quad \text{ for } x\in\R^3, x\neq0, \qquad W(0)=+\infty,
$$
with $\Psi$ strictly positive, even, $0$-homogeneous, and smooth enough on $\mathbb S^2$. More precisely, we proved that, if $\widehat W>0$, then the unique minimiser of the energy
$$
I_{3\rm d}(\mu)=\iint_{\R^3\times\R^3}W(x-y)\,d\mu(y)d\mu(x)+\int_{\R^3}|x|^2\,d\mu(x), \quad \mu\in\P(\R^3),
$$ 
is the normalised characteristic function of the domain enclosed by an ellipsoid centered at the origin. If $\widehat W\geq0$, the minimiser is either of this form or it may collapse on a two-dimensional measure, whose support is given by the domain enclosed by an ellipse.
\end{remark}

\begin{remark}
The original proof by Carrillo \& Shu in \cite{CS22} is based on a different technique. Their starting point is a formula expressing the interaction kernel as an integral of one-dimensional logarithmic kernels on projections. More precisely, they show that 
$$
-\log|x|+\kappa(x) = -\int_{-\pi}^\pi\Om(e^{i\varphi})\log|x\cdot e^{i\varphi}|\, d\varphi + \text{ constant,}
$$
where $\Om$ is the same function as in \eqref{def:Omega}.
They observe that the projection of $\chi_E/|E|$ (with $E$ the domain enclosed by an ellipse centered at the origin) on any line passing through the origin is a semicircle law.
Using this remark and the representation formula above, they argue by projection and conclude by
applying the minimality of the semicircle law for the one-dimensional logarithmic kernel. The same approach can be used to treat two-dimensional anisotropic Riesz kernels of the form
$$
\frac{\Psi(x)}{|x|^s}, \quad 0<s<2,
$$
with $\Psi$ strictly positive, even, $0$-homogeneous, and smooth enough on $\mathbb S^1$. However, their strategy of proof extends to $\R^3$ only under some rather strong symmetry assumptions on the anisotropy, that essentially reduce the problem to a two-dimensional setting (see \cite{CS3d}).
\end{remark}

\end{section}

\begin{section}{Related results and open questions}

In this last section we show how to explicitely determine the optimal distributions in some concrete cases and we discuss some open problems.

\subsection{The dislocation case}
As a first result, we show how to deduce the characterisation of the minimiser of $I_\alpha$ in \eqref{int:Ialfa} from the results of the previous sections.
The original proofs in \cite{CMMRSV, MRS} are based on completely different arguments.

\begin{theorem}\label{thm:disl}
Let $I_\alpha$ be the functional defined in \eqref{int:Ialfa}. Then for $|\alpha|<1$ the unique minimiser of $I_\alpha$ is given by the measure \eqref{E-alfa}.
For $\alpha\geq1$ the unique minimiser of $I_\alpha$ is the semicircle law \eqref{semicircle}, whereas for $\alpha\leq-1$ it is the semicircle law on the horizontal axis
\begin{equation}\label{hsc}
\frac1\pi\sqrt{2-x_1^2}\HH^1\mres[-\sqrt2,\sqrt2](x_1)\otimes \delta_0(x_2).
\end{equation}
\end{theorem}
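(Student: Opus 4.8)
The plan is to treat the five ranges $|\alpha|<1$, $\alpha=1$, $\alpha>1$, $\alpha=-1$, $\alpha<-1$ — effectively three cases, the negative ones reducing to $\alpha\geq1$ by symmetry — using the machinery of Sections~\ref{sec:uniq}--\ref{sec:ch}. Here $\kappa=\kappa_\alpha$ is the $0$-homogeneous function $\kappa_\alpha(x)=\alpha x_1^2/|x|^2=\frac\alpha2+\frac\alpha2\cos 2\theta$ for $x=|x|e^{i\theta}$, so Lemma~\ref{lemmaStein} (with $m=2$, $\gamma_2=-2$) and \eqref{def:Omega} give that the angular part of $\widehat{W_\alpha}$ is
\[
\Om_\alpha(\xi)=(1-\alpha)\xi_1^2+(1+\alpha)\xi_2^2 \qquad\text{for }\xi\in\mathbb S^1 .
\]
Thus $\Om_\alpha>0$ on $\mathbb S^1$ when $|\alpha|<1$, while $\Om_1(\xi)=2\xi_2^2$ is only non-negative, vanishing at $\pm e_1$. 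For $|\alpha|<1$, Theorem~\ref{thm:CS} applies and the only task is to identify the minimising ellipse, i.e.\ to solve the system \eqref{system1} with $\Om=\Om_\alpha$. Taking $M=\mathrm{diag}(1-\alpha,1+\alpha)\in\mpos$ — which corresponds to the ellipse $E_\alpha$ with semi-axes $\sqrt{1-\alpha},\sqrt{1+\alpha}$, so $a_1^2+a_2^2=2$ — one has the crucial identity $My\cdot y=\Om_\alpha(y)$ on $\mathbb S^1$, whence \eqref{system1} collapses to $\frac1\pi\int_{\mathbb S^1}y_jy_k\,d\HH^1(y)=\delta_{jk}$, which holds. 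Hence $\chi_{E_\alpha}/|E_\alpha|$ satisfies \eqref{EL1-p}, \eqref{EL2-p} follows from Step~3 of the proof of Theorem~\ref{thm:CS}, and strict convexity (Proposition~\ref{prop:pars}) identifies \eqref{E-alfa} as the unique minimiser.

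For $\alpha=1$ I would invoke Corollary~\ref{cor:deg}: since $\Om_1\geq0$, the minimiser is either a normalised ellipse with $a_1^2+a_2^2=2$ or a semicircle law on a line through the origin, and by uniqueness it is invariant under $x_1\mapsto-x_1$ and $x_2\mapsto-x_2$, so the ellipse is axis-aligned, resp.\ the line is a coordinate axis. To rule out the ellipse, write $M=\mathrm{diag}(1-s,1+s)$; summing the two diagonal equations of \eqref{system1} with $\Om=\Om_1$ and using $My\cdot y=1-s\cos 2\theta$ on $\mathbb S^1$ forces
\[
h(s):=\frac1\pi\int_0^{2\pi}\frac{1-\cos 2\theta}{1-s\cos 2\theta}\,d\theta=2 .
\]
The integrand is convex in $s\in(-1,1)$, and $h(0)=2$, $h'(0)=-1$, $h(s)\to2$ as $s\to1^-$; convexity then gives $h(s)<2$ on $(0,1)$ and $h(s)>2$ on $(-1,0)$, so $h(s)=2$ only at $s=0$, i.e.\ $M=I$. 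But for $M=I$ the $(1,1)$-equation of \eqref{system1} reads $\frac1\pi\int_0^{2\pi}2\sin^2\theta\cos^2\theta\,d\theta=\tfrac12\neq1$, so no ellipse in $\mpos$ solves \eqref{system1}. Therefore the minimiser is a semicircle on the $x_1$- or the $x_2$-axis; since $\kappa_1(x-y)=\frac{(x_1-y_1)^2}{|x-y|^2}$ equals $0$ along the support of the vertical semicircle and $1$ along that of the horizontal one, the vertical semicircle has strictly smaller energy, so it is the minimiser, namely \eqref{semicircle}.

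For $\alpha>1$ I would use the decomposition $I_\alpha(\mu)=I_1(\mu)+(\alpha-1)K(\mu)$, where $K(\mu):=\iint\frac{(x_1-y_1)^2}{|x-y|^2}\,d\mu(y)d\mu(x)\in[0,1]$, so $I_1(\mu)$ and $I_\alpha(\mu)$ are simultaneously finite. The semicircle law $\mu_{\mathrm{sc}}$ of \eqref{semicircle}, being the unique minimiser of $I_1$ by the previous step, satisfies $K(\mu_{\mathrm{sc}})=0$; hence $I_\alpha(\mu)\geq I_1(\mu)\geq I_1(\mu_{\mathrm{sc}})=I_\alpha(\mu_{\mathrm{sc}})$ for every $\mu$, with equality only if $\mu=\mu_{\mathrm{sc}}$. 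Finally, for $\alpha\leq-1$ I would use the reflection $\sigma(x_1,x_2)=(x_2,x_1)$: since $|\sigma x|=|x|$, $|\sigma x-\sigma y|=|x-y|$ and $\kappa_\alpha(\sigma x)=\alpha x_2^2/|x|^2=\alpha+\kappa_{-\alpha}(x)$, one obtains $I_\alpha(\sigma_\#\mu)=I_{-\alpha}(\mu)+\alpha$, so $\sigma_\#$ carries the unique minimiser of $I_{-\alpha}$ — which is $\mu_{\mathrm{sc}}$ because $-\alpha\geq1$ — to the unique minimiser of $I_\alpha$, that is, the horizontal semicircle \eqref{hsc}.

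The main obstacle is the case $\alpha=1$: all the other ranges follow quickly from Theorem~\ref{thm:CS}, Corollary~\ref{cor:deg}, and soft comparison/symmetry arguments, whereas at $\alpha=1$ one must genuinely exclude an elliptical minimiser, and this rests on the explicit evaluation of $h(s)$ together with the observation that the only $M\in\mpos$ surviving the scalar consequences of \eqref{system1}, namely $M=I$, then fails the full system \eqref{system1}.
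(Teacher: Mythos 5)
Your proof is correct, and for $|\alpha|<1$ and $\alpha>1$ it essentially matches the paper's argument (including the slick observation that with $M=\mathrm{diag}(1-\alpha,1+\alpha)$ one has $My\cdot y=\Om_\alpha(y)$ on $\mathbb S^1$, so that \eqref{system1} collapses to $\tfrac1\pi\int_{\mathbb S^1}y_jy_k\,d\HH^1=\delta_{jk}$; and the decomposition $I_\alpha=I_1+(\alpha-1)K$ with $K\in[0,1]$). For $\alpha=\pm1$ and $\alpha<-1$, however, you take a genuinely different route. The paper handles $\alpha=1$ by an approximation argument: arguing as in Corollary~\ref{cor:deg}, the minimiser of $I_1$ is the narrow limit of the explicitly known elliptical minimisers of $I_\alpha$ as $\alpha\to1^-$, and since $a_{1,\alpha}=\sqrt{1-\alpha}\to0$, $a_{2,\alpha}=\sqrt{1+\alpha}\to\sqrt2$, this limit is the vertical semicircle; $\alpha\leq-1$ is then dispatched by the same direct comparison used for $\alpha>1$. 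You instead invoke Corollary~\ref{cor:deg} and \emph{rule out} the ellipse alternative by showing that \eqref{system1} has no solution in $\mpos$ when $\Om=\Om_1$: you reduce to $M=\mathrm{diag}(1-s,1+s)$ by symmetry and the trace constraint, derive the scalar necessary condition $h(s)=2$ with $h(s)=\tfrac1\pi\int_0^{2\pi}\tfrac{1-\cos2\theta}{1-s\cos2\theta}\,d\theta$, use strict convexity of $h$ together with $h(0)=2$, $h'(0)=-1$, $h(1^-)=2$ to conclude $s=0$, and then observe that $M=I$ fails the $(1,1)$-equation ($\tfrac1\pi\int_0^{2\pi}2\sin^2\theta\cos^2\theta\,d\theta=\tfrac12\neq1$); and you reduce $\alpha\leq-1$ to $\alpha\geq1$ via the coordinate swap $\sigma(x_1,x_2)=(x_2,x_1)$, which satisfies $I_\alpha(\sigma_\#\mu)=I_{-\alpha}(\mu)+\alpha$. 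Both arguments are sound; the paper's limit argument for $\alpha=1$ is shorter because it exploits the explicit family $\{E_\alpha\}_{\alpha<1}$, whereas your nonexistence argument is more self-contained given Corollary~\ref{cor:deg} (and clarifies \emph{why} the ellipse disappears: the Euler--Lagrange system degenerates), and the reflection trick is a cleaner way to dispose of negative $\alpha$ than repeating the comparison verbatim.
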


\begin{proof}
We set $W_\alpha(x)=-\log|x|+\kappa_\alpha(x)$, where
$$
\kappa_\alpha(x):=\alpha \frac{x_1^2}{|x|^2}.
$$
Since we can write
$$
\kappa_\alpha (x)=\frac\alpha2 \frac{x_1^2-x_2^2}{|x|^2}+\frac\alpha2,
$$
Lemma~\ref{lemmaStein} ensures that 
\begin{equation}\label{Fourier Wa}
\widehat{W_\alpha}(\xi)= c_\alpha\delta_0+\frac{\Om_\alpha(\xi)}{|\xi|^2}
\end{equation}
with 
$$
\Om_\alpha(\xi)=(1-\alpha)\frac{\xi_1^2}{|\xi|^2}+(1+\alpha)\frac{\xi_2^2}{|\xi|^2}, \quad \xi\neq0.
$$ 

Assume $|\alpha|<1$. Since $\Om_\alpha>0$ on $\mathbb S^1$ in this case,
by Theorem~\ref{thm:CS} the unique minimiser is the normalised characteristic function of the domain enclosed by an ellipse of semiaxes $a_{1,\alpha}, a_{2,\alpha}$.
We note that $W_\alpha$ is symmetric with respect to the coordinate axes, that is,
$$
W_\alpha(-x_1,x_2)=W_\alpha(x_1,-x_2)=W_\alpha(x) \quad \text{ for every } x\in\R^2.
$$
By uniqueness the minimiser must have the same symmetry, that is, the ellipse is symmetric with respect to the coordinate axes. Therefore, in system \eqref{system} 
the rotation $R$ is necessarily the identity matrix and the equation for $j=1$, $k=2$ is trivially satisfied by symmetry. 
In other words, using the expression of $\Om_\alpha$, the semiaxes $a_{1,\alpha}, a_{2,\alpha}$ satisfy
$$
\frac1\pi \int_{\mathbb S^1} \frac{(1-\alpha)y_1^2+(1+\alpha)y_2^2}{(a_{1,\alpha})^2 y_1^2 + (a_{2,\alpha})^2 y_2^2} y_j^2\, d\HH^1(y)=1 \quad \text{ for every } j=1,2.
$$
It is immediate to see that $a_{1,\alpha}=\sqrt{1-\alpha}$, $a_{2,\alpha}=\sqrt{1+\alpha}$ is a solution of this system. It is indeed the unique solution, since we proved in Step~3 of the proof of Theorem~\ref{thm:CS} that any solution of \eqref{system} is automatically a minimiser and the minimiser is unique. We conclude that for $|\alpha|<1$ the minimiser is given by the measure \eqref{E-alfa}.

Let now $\alpha=1$. Since $\Om_1\geq0$ on $\mathbb S^1$, the minimiser is still unique. Arguing as in the proof of Corollary~\ref{cor:deg}, one can show that the minimiser of $I_1$ has to be the limit of the minimiser of $I_\alpha$, as $\alpha\to1^-$. Therefore, the minimiser is the semicircle law \eqref{semicircle}. The same argument applies to $\alpha=-1$.

If $\alpha>1$, let us denote by $\mu_{\rm sc}$ the semicircle law in \eqref{semicircle} and let $\mu$ be any other measure in $\P(\R^2)$. Then
$$
I_{\alpha}(\mu)\geq I_{1}(\mu)>I_1(\mu_{\rm sc})=I_\alpha(\mu_{\rm sc}),
$$
where the last equality follows from the fact that the anisotropic interaction is $0$ on the semicircle law. A similar argument shows that \eqref{hsc} is the unique minimiser of $I_\alpha$ for $\alpha<-1$.
\end{proof}

\begin{remark}\label{ross}
For the anisotropy \eqref{kappa-aniso} and quadratic confinement one can prove that the unique minimiser is the normalised characteristic function of the domain enclosed by an ellipse (with explicit semi-axes) if $b^2<1+a^2$, whereas is the semicircle law \eqref{semicircle} if $b^2\geq 1 +a^2$, see \cite{Giorgio}.
Since the Fourier transform of the kernel $W$ in this case is `degenerate' outside $0$ for any $b>a>0$, this example shows that both options predicted by Corollary~\ref{cor:deg} can indeed occur. From a mechanical viewpoint the quantity $H=b^2-a^2-1$ is the so-called anisotropy factor, which measures the degree of anisotropy in a cubic crystal, see \cite[eq.~(13-27)]{HL}.
\end{remark}

\subsection{Elliptic physical confinement}
In this section we provide a full characterisation of minimisers for a general anisotropy in the case of the physical confinement 
$$
V(x)=\begin{cases}
0 & \text{ if } x\in E, \\
+\infty & \text{ if } x\not\in E,
\end{cases}
$$
where $E$ is the domain enclosed by an ellipse centered at the origin. More precisely, we have the following result.

\begin{theorem}\label{thm:phys}
Let $E=RE_0$ with $R\in SO(2)$ and 
$$
E_0=\Big\{ x\in\R^2: \ \frac{x_1^2}{a_1^2}+\frac{x_2^2}{a_2^2}\leq 1\Big\}.
$$
Let $J$ be the functional defined by
$$
J(\mu)=\iint_{E\times E}\big(-\log|x-y|+\kappa(x-y)\big)\,d\mu(y)d\mu(x)
$$
for every $\mu\in \P(E)$, where the anisotropy $\kappa$ is even, $0$-homogeneous, and of class $H^s$ on $\mathbb S^1$ with $s>3/2$.
Assume $\Om\geq0$ on $\mathbb S^1$, where $\Om$ is the function introduced in \eqref{def:Omega}.
Then the unique minimiser of $J$ is given by the push-forward $f_\#\mu_{E_0}$ of the measure
$$
\mu_{\partial E_0}:=\frac1{2\pi a_1a_2}\Big(\frac{x_1^2}{a_1^4}+\frac{x_2^2}{a_2^4}\Big)^{-\frac12}\HH^1\mres \partial E_0
$$
through the rotation map $f(x)=Rx$ for $x\in\R^2$.
\end{theorem}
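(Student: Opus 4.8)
The plan is to reduce to the case $R=I$ by a push-forward and then to verify directly that $\mu_{\partial E_0}$ satisfies the Euler--Lagrange conditions of Theorem~\ref{thm:exist}, which for physical confinement are sufficient for minimality by Proposition~\ref{prop:pars}. Since $\mu\mapsto f_\#\mu$ is a bijection of $\P(E_0)$ onto $\P(E)$ and $|Rz|=|z|$, one has
$$
J(f_\#\tilde\mu)=\iint_{E_0\times E_0}\big(-\log|x-y|+\tilde\kappa(x-y)\big)\,d\tilde\mu(y)\,d\tilde\mu(x),\qquad \tilde\kappa:=\kappa\circ R .
$$
The anisotropy $\tilde\kappa$ is again even, $0$-homogeneous, and of class $H^s$ on $\mathbb S^1$, and since the Fourier transform commutes with rotations the associated angular function is $\widetilde\Om=\Om\circ R\ge0$. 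Hence it suffices to show that $\mu_{\partial E_0}$ is the unique minimiser of the energy on $E_0$ for a general admissible anisotropy with $\Om\ge0$; I drop the tildes from now on. The confinement $V$ ($=0$ on $E_0$, $+\infty$ outside) is lower semicontinuous, bounded below and satisfies \eqref{V1}--\eqref{V2}, so Theorem~\ref{thm:exist} provides a minimiser together with \eqref{EL1}--\eqref{EL2}, and, because $\Om\ge0$, Proposition~\ref{prop:pars} and the ensuing convexity make \eqref{EL1}--\eqref{EL2} equivalent to minimality and the minimiser unique. It therefore remains only to exhibit a constant $c$ with $(W\ast\mu_{\partial E_0})(x)=c$ on $\supp\mu_{\partial E_0}=\partial E_0$ and $(W\ast\mu_{\partial E_0})(x)\ge c$ on $E_0$; I will in fact obtain $W\ast\mu_{\partial E_0}\equiv c$ on all of $\bar E_0$.

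Two ingredients enter. First, a short computation with the arc-length element of $\partial E_0$ shows that $\mu_{\partial E_0}=D(a)_\#\sigma$, where $D(a)=\mathrm{diag}(a_1,a_2)$ and $\sigma=\tfrac1{2\pi}\HH^1\mres\mathbb S^1$; in particular $\mu_{\partial E_0}\in\P(E_0)$ has bounded density with respect to $\HH^1\mres\partial E_0$ and hence finite interaction energy. Second, I will use the representation formula of \cite{CS22}, valid for any admissible anisotropy,
$$
-\log|x|+\kappa(x)=c_W-\frac1{2\pi}\int_{-\pi}^{\pi}\Om(e^{i\varphi})\log|x\cdot e^{i\varphi}|\,d\varphi
$$
for a suitable constant $c_W$. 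Convolving with $\mu_{\partial E_0}$, interchanging the $\varphi$- and $y$-integrations, and changing variables $y=D(a)\omega$ gives
$$
(W\ast\mu_{\partial E_0})(x)=c_W-\frac1{2\pi}\int_{-\pi}^{\pi}\Om(e^{i\varphi})\Big(\int_{\mathbb S^1}\log\big|x\cdot e^{i\varphi}-\omega\cdot D(a)e^{i\varphi}\big|\,d\sigma(\omega)\Big)d\varphi .
$$
For fixed $\varphi$ the law of $\omega\mapsto\omega\cdot D(a)e^{i\varphi}$ under $\sigma$ is the arcsine measure on $[-L_\varphi,L_\varphi]$ with $L_\varphi:=|D(a)e^{i\varphi}|$, that is, the logarithmic equilibrium measure of that interval, so that $s\mapsto\int\log|s-t|\,d(\text{this measure})(t)$ is the constant $\log(L_\varphi/2)$ on $[-L_\varphi,L_\varphi]$. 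Finally, for $x\in\bar E_0$ one has $|x\cdot e^{i\varphi}|=|D(a)^{-1}x\cdot D(a)e^{i\varphi}|\le|D(a)^{-1}x|\,L_\varphi\le L_\varphi$, so the inner integral equals $\log(L_\varphi/2)$, independently of $x$; hence
$$
(W\ast\mu_{\partial E_0})(x)=c_W-\frac1{2\pi}\int_{-\pi}^{\pi}\Om(e^{i\varphi})\log\frac{L_\varphi}2\,d\varphi=:c\qquad\text{for every }x\in\bar E_0,
$$
a finite number since $\Om$ is continuous and $0<\min\{a_1,a_2\}\le L_\varphi\le\max\{a_1,a_2\}$.

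Given this, the Euler--Lagrange conditions are immediate: with $V=0$ on $E_0$ and $\supp\mu_{\partial E_0}=\partial E_0\subset E_0$, \eqref{EL1} reads $c+0=c$, and \eqref{EL2} reads $c+0=c\ge c$ on $E_0$ and $+\infty\ge c$ off $E_0$. By the uniqueness statement above, $\mu_{\partial E_0}$ is the unique minimiser of the energy on $E_0$, and undoing the reduction shows $f_\#\mu_{\partial E_0}$ is the unique minimiser of $J$. I expect the one point requiring genuine care to be the justification that the distributional identity of \cite{CS22} may be convolved with $\mu_{\partial E_0}$ and that the order of integration in the double integral over $(\varphi,y)$ may be exchanged: this rests on Tonelli's theorem together with the bound, uniform for $s$ in the interval and $L_\varphi$ in a compact subset of $(0,\infty)$, on $\int|\log|s-t||$ against the arcsine measure, and on the finiteness of the logarithmic energy of $\mu_{\partial E_0}$. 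The remaining steps are routine manipulations with classical one-dimensional potential theory.
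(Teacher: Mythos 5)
Your proof is correct, but it takes a genuinely different route from the paper. The paper's proof is a direct consequence of the Fourier-analytic formula \eqref{inv-form} derived in the proof of Theorem~\ref{thm:CS}: one observes that $\nabla(W\ast\chi_{E_t})$ evaluated inside the scaled ellipse $E_t$ is independent of $t$, then differentiates in $t$ and applies the coarea formula to produce the boundary measure $\mu_{\partial E_0}$ and conclude $\nabla(W\ast\mu_{\partial E_0})\equiv 0$ in the interior of $E_0$. Your proof instead runs through the Carrillo--Shu representation of the kernel as a superposition of 1d logarithmic kernels along directions, quoted in the paper only in a remark: you disintegrate $W\ast\mu_{\partial E_0}$ over directions $\varphi$, identify the pushforward of $\mu_{\partial E_0}$ onto each line as the arcsine (equilibrium) measure on $[-L_\varphi,L_\varphi]$ using the clean identity $\mu_{\partial E_0}=D(a)_\#\sigma$, and invoke constancy of the 1d equilibrium potential on its interval. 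This is precisely the adaptation of Carrillo \& Shu's projection argument (which uses the semicircle law for the full ellipse) to the boundary measure; each approach buys something: yours is elementary 1d potential theory modulo the representation formula and avoids Bessel-function computations entirely, while the paper's is self-contained given Theorem~\ref{thm:CS} and extends directly to 3d where the projection trick is obstructed, as the paper itself notes.

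One small point worth flagging: you rely on the Carrillo--Shu representation formula (with the correct $1/(2\pi)$ normalisation, which the paper's remark omits), and on the rigorous justification of exchanging the $\varphi$- and $y$-integrations. You acknowledge this last point yourself and the Tonelli argument you sketch is indeed routine because the arcsine potential of $|\log|s-t||$ is bounded uniformly over compact families of intervals and $\Omega$ is bounded; the finiteness of the logarithmic energy of $\mu_{\partial E_0}$ also follows immediately from its bounded density on the $C^1$ curve $\partial E_0$. The reduction to $R=I$ via pushforward, and the verification that $\mu_{\partial E_0}=D(a)_\#\sigma$, are both correct.
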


\begin{remark}
This result was proved in \cite{MS} for a special class of anisotropies. Note that when $E$ is the closed ball $B_r$ of radius $r>0$ and center $0$ the minimiser is given by the
uniform distribution on the boundary of the ball, that is, by the measure
$$
\mu_{\partial B_r}:=\frac1{2\pi r}\HH^1\mres \partial B_r.
$$
\end{remark}

\begin{proof}[Proof of Theorem~\ref{thm:phys}]
Up to rotating the axes and replacing $\kappa(x)$ by $\kappa(Rx)$, we can assume without loss of generality that $R$ is the identity matrix and $E=E_0$.

Theorem~\ref{thm:exist} and Proposition~\ref{prop:pars} guarantee that the minimiser exists and is unique. Moreover, it is characterised by the Euler-Lagrange equations \eqref{EL1}--\eqref{EL2}, which take the following form:
\begin{eqnarray}\label{EL1p}
& & (W\ast\mu)(x)=c \quad \text{ for } \mu\text{-a.e.\ } x\in\supp\mu,
\\
\label{EL2p}
& & (W\ast\mu)(x)\geq c \quad \text{ for q.e.\ } x\in E_0.
\end{eqnarray}
To prove the theorem it is enough to show that the measure $\mu_{\partial E_0}$ satisfies \eqref{EL1p}--\eqref{EL2p}.

For $t>0$ we consider the set
$$
E_t=\Big\{ x\in\R^2: \ \frac{x_1^2}{a_1^2}+\frac{x_2^2}{a_2^2}\leq 1+t\Big\}.
$$
By the formula \eqref{inv-form} for the potential of a general ellipse we have that
\begin{equation}\label{invar}
\nabla(W\ast\chi_{E_t})(x)= - a_1a_2 \int_{\mathbb S^1} \frac{\Om(y)}{|D(a)y|^2}(x\cdot y)y \, d\HH^1(y)
\end{equation}
for every $x\in E_t$ and every $t\geq0$. 

Let now $x$ be in the interior of $E_0$. Since $x\in E_t$ for every $t\geq0$, equation \eqref{invar} holds for every $t\geq0$. Differentiating both sides of \eqref{invar} with respect to $t$ and applying the coarea formula yield
$$
0= \frac{d}{dt}\Big( \int_{E_t} \nabla W(x-y)\, dy\Big) 
= \frac12 \int_{\partial E_t} \nabla W(x-y)\Big(\frac{y_1^2}{a_1^4}+\frac{y_2^2}{a_2^4}\Big)^{-\frac12}\, d\HH^1(y)
$$
for a.e.\ $t\geq0$. For $x$ in the interior of $E_0$ the right-hand side is a continuous function of $t$, therefore we deduce that
$$
0= \frac12 \int_{\partial E_0} \nabla W(x-y)\Big(\frac{y_1^2}{a_1^4}+\frac{y_2^2}{a_2^4}\Big)^{-\frac12}\, d\HH^1(y)=\pi a_1a_2\nabla(W\ast\mu_{\partial E_0})(x)
$$
for every $x$ in the interior of $E_0$. Since $W\ast\mu_{\partial E_0}$ is a continuous function in $\mathbb R^2$, this implies that $W\ast\mu_{\partial E_0}$ is constant in $E_0$, that is, \eqref{EL1p}--\eqref{EL2p} are satisfied for $\mu=\mu_{\partial E_0}$.
\end{proof}

\subsection{Some open questions and further comments}
As a consequence of Theorem~\ref{thm:CS} and Corollary~\ref{cor:deg}, energies of the form \eqref{Ikappa} may have minimisers of non-full dimensionality only if the Fourier transform of their kernel 
is degenerate. However, degeneracy of the Fourier transform is not a sufficient condition for loss of dimensionality, see Remark~\ref{ross}.
The arguments of proof in Theorem~\ref{thm:CS} show that, in the convexity range, loss of dimensionality occurs if and only if system \eqref{system1} does not have a solution $M$ in $\mpos$.
Yet it would be desirable to devise a criterion for the occurrence of a lower dimensional optimal distribution, without resorting to explicit computations. 
Similarly, in the case of fully dimensional minimisers, there is no characterisation available for the rotation of the optimal ellipse with respect to the coordinate axes.

However, some simple considerations can be made in some specific cases. Indeed, if $\mu$ is a measure with no atoms and support on a straight line passing through the origin, then by $0$-homogeneity
$$
\iint_{\R^2\times\R^2} \kappa(x-y)\,d\mu(x) d\mu(y)=\kappa(v)
$$
where $v\in \mathbb S^1$ is a vector parallel to the support of $\mu$.  
Since the logarithmic interaction and the confinement are radially symmetric, this implies that, among all measures with support on a straight line through the origin, the minimal energy is attained in the directions where $\kappa$ is minimal. Therefore, if $\kappa$ has more than one minimiser in the set
$\{x\in\mathbb S^1: x_1\in (-1,1]\}$, then loss of dimensionality cannot occur, as long as the Fourier transform of the kernel is non-negative, otherwise uniqueness would be violated.


Theorem~\ref{thm:phys} shows that the choice of confinement may also have a strong impact on the shape of minimisers and on their dimensionality. Preliminary computations indicate that strict positivity of the Fourier transform should guarantee full dimensionality of minimisers for smooth confinements, such as, e.g., $V(x)=|x|^p$ with $p\geq2$.

Another interesting question is the analysis of optimal distributions outside the convexity range, that is, for kernels whose Fourier transform is negative along some directions.
Numerical simulations seem to suggest the occurrence of rather complex patterns, see \cite{CS22}.  
\end{section}

\bigskip\bigskip

\noindent
\textbf{Acknowledgements.}
Part of the material of this paper is based on the content of the course ``Nonlocal interaction problems in dislocation theory'', that the author taught at the Summer School on Analysis and Applied Mathematics in M\"unster in September 2022. The author would like to express her gratitude to the organisers and to the University of M\"unster for the support and the hospitality. The author thanks R\'emy Rodiac for bringing Bochner Theorem to her attention.
Support from MIUR--PRIN 2017 is also acknowledged. The author is a member of GNAMPA--INdAM.\bigskip

\end{document}